\newcommand{\tr}{^{\sf T}}
\newcommand{\m}[1]{{\bf{#1}}}
\newcommand{\g}[1]{\bm #1}
\newcommand{\C}[1]{{\cal {#1}}}
\newtheorem{remark}{Remark}
\title{{\bf An exact algorithm for graph partitioning}
\thanks{ November 09, 2009.
This material is based upon work supported by the
National Science Foundation under Grant 0620286.
}}
\date{}
\author{
        William W. Hager\thanks{{\tt hager@math.ufl.edu},
        http://www.math.ufl.edu/$\sim$hager,
        PO Box 118105,
        Department of Mathematics,
        University of Florida, Gainesville, FL 32611-8105.
        Phone (352) 392-0281. Fax (352) 392-8357.}
\and
        Dzung T. Phan\thanks{{\tt dphan@math.ufl.edu},
        http://www.math.ufl.edu/$\sim$dphan,
        PO Box 118105,
        Department of Mathematics,
        University of Florida, Gainesville, FL 32611-8105.
        Phone (352) 392-0281. Fax (352) 392-8357.}
\and
        Hongchao Zhang\thanks{{\tt hozhang@math.lsu.edu},
        http://www.math.lsu.edu/$\sim$hozhang,
        Department of Mathematics,
        140 Lockett Hall,
        Center for Computation and Technology,
        Louisiana State University, Baton Rouge, LA 70803-4918.
        Phone (225) 578-1982. Fax (225) 578-4276.}
}
\begin{document}
\maketitle

\begin{abstract}
An exact algorithm is presented for solving edge weighted
graph partitioning problems.
The algorithm is based on a branch and bound
method applied to a continuous quadratic programming formulation of the problem.
Lower bounds are obtained by decomposing the objective function into
convex and concave parts and replacing the concave part by an affine
underestimate.
It is shown that the best affine underestimate
can be expressed in terms of the center and the radius of the
smallest sphere containing the feasible set.
The concave term is obtained either by a constant diagonal shift
associated with the smallest eigenvalue of the objective function Hessian,
or by a diagonal shift obtained by
solving a semidefinite programming problem.
Numerical results show that the proposed algorithm
is competitive with state-of-the-art graph partitioning codes.
\end{abstract}

\begin{AMS}
90C35, 90C20, 90C27, 90C46
\end{AMS}

\begin{keywords}
graph partitioning, min-cut, quadratic programming, branch and bound,
affine underestimate
\end{keywords}

\pagestyle{myheadings} \thispagestyle{plain}
\markboth{W. W. HAGER, D. T. PHAN, H. ZHANG}
{GRAPH PARTITIONING}

\section{Introduction}
\label{introduction}

Given a graph with edge weights, the
graph partitioning problem is to partition the vertices into
two sets satisfying specified size constraints,
while minimizing the sum of the weights of
the edges that connect the vertices in the two sets.
Graph partitioning problems arise in many areas including
VLSI design, data mining,
parallel computing, and sparse matrix factorizations
\cite{HagerKrylyuk99, Johnson93, Lengauer, Teng}.
The graph partitioning problem is NP-hard \cite{Garey76}.

There are two general classes of methods for the graph partitioning
problem, exact methods which compute the optimal partition,
and heuristic methods which try
to quickly compute an approximate solution.
Heuristic methods include spectral methods
\cite{HendricksonLeland95}, geometric methods
\cite{GilbertMillerTeng98}, multilevel schemes \cite{Hendrickson},
optimization-based methods \cite{FalknerRendlWolkowicz94}, and
methods that employ randomization techniques such as genetic
algorithms \cite{SoperWalshawCross04}.
Software which implements heuristic methods includes
Metis (\cite{KarypisKumar98e,KarypisKumar99b,KarypisKumar00}),
Chaco \cite{hendrickson94chaco}, Party \cite{Preis96theparty},
PaToH \cite{Catalyurek-hyper},
SCOTCH \cite{PellegriniRomanAmestoy00},
Jostle \cite{WalshawCrossEverett97},
Zoltan \cite{Zoltan06ipdps}, and
HUND \cite{GrigoriBomanDonfackDavis08}.

This paper develops an exact algorithm for the graph partitioning problem.
In earlier work,
Brunetta, Conforti, and Rinaldi
\cite{bcr97} propose a branch-and-cut scheme based on a linear
programming relaxation and subsequent cuts based on separation techniques.
A column generation approach is developed by Johnson, Mehrotra, and
Nemhauser \cite{Johnson93}, while Mitchell \cite{Mitchell01}
develops a polyhedral approach.
Karisch, Rendl, and Clausen \cite{Karisch00} develop
a branch-and-bound method utilizing a semidefinite programming
relaxation to obtain a lower bound. Sensen \cite{Sensen01} develops
a branch-and-bound method based on a
lower bound obtained by solving a multicommodity flow problem.

In this paper, we develop a branch-and-bound algorithm based on
a quadratic programming (QP) formulation of the graph partitioning problem.
The objective function of the QP is expressed as the sum of
a convex and a concave function.
We consider two different techniques for making this decomposition,
one based on eigenvalues and the other based on semidefinite programming.
In each case, we give an affine underestimate for the concave function,
which leads to a tractable lower bound in the branch and bound algorithm.

The paper is organized as follows.
In Section \ref{continuousQP} we review the continuous
quadratic programming formulation of the
graph partitioning problem developed in \cite{HagerKrylyuk99}
and we explain how to associate a solution of the continuous problem
with the solution to the discrete problem.
In Section \ref{LowerBound} we discuss approaches for decomposing
the objective function for the QP into the sum of convex and a concave
functions, and in each case, we show how to generate an affine lower bound
for the concave part.
Section \ref{BB} gives the branch-and-bound algorithm,
while Section \ref{NS} provides necessary and sufficient conditions
for a local minimizer.
Section \ref{numerics} compares the performance of the new
branch-and-bound algorithm to earlier results given in
\cite{Karisch00} and \cite{Sensen01}.

{\bf Notation.} Throughout the paper, $\| \cdot \|$ denotes
the Euclidian norm. $\m{1}$ is the vector whose entries are all 1.
The dimension will be clear from context.
If $\m{A} \in \mathbb{R}^{n\times n}$, $\m{A} \succeq \m{0}$ means
that $\m{A}$ is positive semidefinite.
We let $\m{e}_i$ denote the $i$-th column of the identity matrix;
again, the dimension will be clear from context.
If $\C{S}$ is a set, then $|\C{S}|$ is the number of elements in $\C{S}$.
The gradient $\nabla f (\m{x})$ is a row vector.
\section{Continuous quadratic programming formulation}
\label{continuousQP}
Let $G$ be a graph with $n$ vertices
\[
\C{V} = \{ 1, 2, \cdots , n \},
\]
and let $a_{ij}$ be a weight associated with the edge $(i,j)$.
When there is no edge between $i$ and $j$, we set $a_{ij} = 0$.
For each $i$ and $j$, we assume that $a_{ii} = 0$ and $a_{ij} = a_{ji}$;
in other words, we consider an undirected graph without self loops
(a simple, undirected graph).
The sign of the weights is not restricted, and in fact,
$a_{ij}$ could be negative, as it would be in the max-cut problem.
Given integers $l$ and $u$ such that $0 \le l \le u \le n$,
we wish to partition the vertices into two disjoint sets,
with between $l$ and $u$ vertices in one set,
while minimizing the sum of the weights associated with edges
connecting vertices in different sets.
The edges connecting the two sets in the partition are referred to
as the cut edges, and the optimal partition minimizes the sum of the
weights of the cut edges.
Hence, the graph partitioning problem is also called the min-cut problem.

In \cite{HagerKrylyuk99} we show that for a suitable choice of
the diagonal matrix $\m{D}$,
the graph partitioning problem is equivalent to the following
continuous quadratic programming problem:
\begin{equation}\label{Q}
\begin{array}{c}
\mbox{minimize } \; \; f(\m{x}) := (\m{1} - \m{x} )\tr (\m{A}+\m{D}) \m{x} \\
\rule{0in}{.2in} \mbox{ subject to } \; \m{0} \le \m{x} \le \m{1} ,
\;\; l \le \m{1}\tr \m{x} \le u,
\end{array}
\end{equation}
where $\m{A}$ is the matrix with elements $a_{ij}$.
Suppose $\m{x}$ is binary and let us define the sets
\begin{equation}\label{part}
\C{V}_0 = \{ i : x_i = 0 \} \quad \mbox{and} \quad \C{V}_1 = \{ i:
x_i = 1\} .
\end{equation}
It can be checked that $f(\m{x})$ is the sum of the weights of
the cut edges associated with the partition (\ref{part}).
Hence, if we add the restriction that $\m{x}$ is binary, then
(\ref{Q}) is exactly equivalent to finding the partition which
minimizes the weight of the cut edges.
Note, though, that there are no binary constraints in (\ref{Q}).
The equivalence between (\ref{Q}) and the graph partitioning problem
is as follows (see \cite[Thm. 2.1]{HagerKrylyuk99}):
\bigskip

\begin{theorem}
\label{Q=GP}
If the diagonal matrix $\m{D}$ is chosen so that
\begin{equation}\label{d-condition}
d_{ii} + d_{jj} \ge 2a_{ij} \quad \mbox{and} \quad d_{ii} \ge 0
\end{equation}
for each $i$ and $j$, then $(\ref{Q})$ has a binary solution
$\m{x}$ and the partition given by $(\ref{part})$ is a min-cut.
\end{theorem}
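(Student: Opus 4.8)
The plan is to show that any optimal solution $\m{x}^*$ of the continuous problem $(\ref{Q})$ can be converted, without increasing the objective, into a binary optimal solution, and then to identify $f$ at a binary point with the cut weight. First I would observe that $f$ is a quadratic whose Hessian is $-(\m{A}+\m{D}) - (\m{A}+\m{D})\tr = -2(\m{A}+\m{D})$, so along any line segment in the feasible polytope $f$ is a one-dimensional quadratic; the key structural fact supplied by the condition $d_{ii}+d_{jj}\ge 2a_{ij}$ is that the diagonal entries of $\m{A}+\m{D}$ are nonnegative (take $i=j$, using $a_{ii}=0$, so $d_{ii}\ge 0$, which is also assumed directly) — hence, restricted to a single coordinate direction $\m{e}_i$ with all other coordinates fixed, $f$ is concave in $x_i$. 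This is the crucial observation: a concave function on an interval attains its minimum at an endpoint.

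Next I would run a coordinate-wise rounding argument. Starting from an optimal $\m{x}^*$, pick any fractional coordinate $x_i^* \in (0,1)$. Because $f$ is concave as a function of $x_i$ alone on the interval determined by the box constraint $0\le x_i\le 1$ and the knapsack constraint $l\le \m{1}\tr\m{x}\le u$ (the feasible range of $x_i$ with the other coordinates frozen is itself a subinterval of $[0,1]$), moving $x_i$ to one of the two endpoints of that interval does not increase $f$. If an endpoint is $0$ or $1$ we have made that coordinate binary; if instead the binding constraint is $\m{1}\tr\m{x} = l$ or $\m{1}\tr\m{x} = u$, then we can instead simultaneously trade mass between $x_i$ and another fractional coordinate $x_j$ along the direction $\m{e}_i - \m{e}_j$, on which $f$ is again a quadratic; I would argue (using the inequality $d_{ii}+d_{jj}\ge 2a_{ij}$ precisely here, to get the second derivative $(\m{e}_i-\m{e}_j)\tr(-2)(\m{A}+\m{D})(\m{e}_i-\m{e}_j) = -2(d_{ii}+d_{jj}-2a_{ij}) \le 0$) that $f$ is concave along this direction too, so again a minimizer is attained at an endpoint of the feasible segment, where one more coordinate becomes $0$ or $1$. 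Iterating, after finitely many steps we reach a binary optimal point $\m{x}$.

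Finally I would verify the cut-weight identity: for binary $\m{x}$ with $\C{V}_0,\C{V}_1$ as in $(\ref{part})$, expand $f(\m{x}) = (\m{1}-\m{x})\tr\m{A}\m{x} + (\m{1}-\m{x})\tr\m{D}\m{x}$; the diagonal term vanishes since $x_i(1-x_i)=0$ for binary $x_i$, and $(\m{1}-\m{x})\tr\m{A}\m{x} = \sum_{i\in\C{V}_0,\,j\in\C{V}_1} a_{ij} = \sum_{(i,j)\ \text{cut}} a_{ij}$, using $a_{ij}=a_{ji}$ and $a_{ii}=0$. Since the binary feasible points of $(\ref{Q})$ are in bijection with the admissible partitions and $f$ equals the cut weight on them, a binary minimizer of $(\ref{Q})$ gives a min-cut. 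I expect the main obstacle to be the bookkeeping in the rounding step — handling the case where the knapsack constraint, rather than the box, is the active constraint blocking a single coordinate from reaching $\{0,1\}$, and confirming that the two-coordinate exchange direction $\m{e}_i-\m{e}_j$ always makes progress and terminates; everything else is a short computation once the concavity-along-segments observation is in hand.
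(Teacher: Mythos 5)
Your plan is correct and follows essentially the same route as the paper: exploit concavity of $f$ along the directions $\m{e}_i$ (from $d_{ii}\ge 0$) and $\m{e}_i-\m{e}_j$ (from $d_{ii}+d_{jj}\ge 2a_{ij}$) to move an optimal point to a binary feasible point without increasing $f$, then identify $f$ at binary points with the cut weight. The only difference is minor bookkeeping (the paper first drives $\m{1}\tr\m{x}$ to an integer with single-coordinate moves and then uses only pairwise exchanges, while you switch to pairwise exchanges when the knapsack constraint binds at $l$ or $u$), which does not change the argument.
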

\bigskip

The generalization of this result to multiset partitioning is given
in \cite{HagerKrylyuk02}.
The condition (\ref{d-condition}) is satisfied, for example,
by the choice
\[
d_{jj} = \max \;\; \{ 0, a_{1j}, a_{2j}, \ldots , a_{nj} \}
\]
for each $j$.
The proof of Theorem \ref{Q=GP} was based on showing that any solution
to (\ref{Q}) could be transformed to a binary solution without changing
the objective function value.
With a modification of this idea, any feasible point can
be transformed to a binary feasible point without increasing the
objective function value.
We now give a constructive proof of this result,
which is used when we solve (\ref{Q}).

\begin{corollary}
\label{move_to_binary}
If $\m{x}$ is feasible in $(\ref{Q})$ and the diagonal matrix $\m{D}$
satisfies $(\ref{d-condition})$,
then there exists a binary $\m{y}$ with $f(\m{y}) \le f(\m{x})$ and
$y_i = x_i$ whenever $x_i$ is binary.
\end{corollary}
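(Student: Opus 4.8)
The plan is to turn $\m{x}$ into a binary feasible point by a finite sequence of moves, each of which preserves the linear constraint $l \le \m{1}\tr\m{x} \le u$, never alters a coordinate that is already binary, does not increase $f$, and strictly decreases the number of non-binary coordinates. The key observation is that $f$ is quadratic with Hessian $-2(\m{A}+\m{D})$, so along any direction of the form $\m{e}_j-\m{e}_k$ or $\m{e}_k$ its second derivative is $-2(\m{e}_j-\m{e}_k)\tr(\m{A}+\m{D})(\m{e}_j-\m{e}_k) = -2(d_{jj}+d_{kk}-2a_{jk}) \le 0$, respectively $-2\m{e}_k\tr(\m{A}+\m{D})\m{e}_k = -2d_{kk} \le 0$, by the two inequalities in $(\ref{d-condition})$ (using $a_{jj}=0$). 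Hence $f$ restricted to any line in one of these directions is concave, so its minimum over a closed segment is attained at an endpoint.

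Given a feasible $\m{x}$ that is not binary, I would split into two cases. If exactly one coordinate $x_k$ is non-binary, then $s := \m{1}\tr\m{x} - x_k$ is an integer, and since $l \le \m{1}\tr\m{x} \le u$ with $l,u$ integers and $0 < x_k < 1$, one checks that $l-s \le 0$ and $u-s \ge 1$; therefore both replacements $x_k := 0$ and $x_k := 1$ keep $\m{x}$ feasible, and by concavity of $f$ in $x_k$ at least one of them does not increase $f$. If at least two coordinates $x_j, x_k$ are non-binary, I would move along $\m{x}(t) = \m{x} + t(\m{e}_j-\m{e}_k)$; this leaves $\m{1}\tr\m{x}$ unchanged, and the box constraints confine $t$ to an interval $[t_-,t_+]$ with $t_- < 0 < t_+$. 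At each endpoint of this interval at least one of $x_j, x_k$ equals $0$ or $1$, and by concavity of $t \mapsto f(\m{x}(t))$ the smaller of the two endpoint values is $\le f(\m{x})$. In either case we reach a feasible point with strictly fewer non-binary coordinates, with no larger objective value, and with the same value on every coordinate that was already binary; iterating at most $n$ times yields the desired $\m{y}$.

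The concavity computation and the interval/endpoint bookkeeping are routine. The point that needs genuine care — and where both the hypothesis on $\m{D}$ and the integrality of $l$ and $u$ enter — is verifying that a rounding move is always available: in the one-coordinate case this is the inequalities $l-s \le 0$, $u-s \ge 1$, and in the two-coordinate case it is checking that the feasible $t$-interval is nondegenerate and that each of its endpoints forces one of $x_j, x_k$ to become binary. Everything else follows from concavity of $f$ along the chosen search directions.
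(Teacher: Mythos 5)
Your proposal is correct and takes essentially the same route as the paper: moves along the directions $\m{e}_k$ and $\m{e}_j-\m{e}_k$, along which the condition $(\ref{d-condition})$ makes $f$ concave, so pushing to an endpoint of the feasible segment does not increase $f$ while making an additional coordinate binary and leaving already-binary coordinates untouched. The only difference is organizational: the paper first uses single-coordinate moves to drive $\m{1}\tr\m{x}$ to an integer and then applies the pairwise moves, whereas you apply pairwise moves until at most one fractional coordinate remains and then round it, invoking the integrality of $l$ and $u$ at exactly the point where the paper does.
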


\begin{proof}
We first show how to find $\m{z}$ with the property that
$\m{z}$ is feasible in (\ref{Q}),
$f(\m{z}) \le f (\m{x})$, $\m{1}\tr\m{z}$ is integer,
and the only components of $\m{z}$ and $\m{x}$
which differ are the fractional components of $\m{x}$.
If $\m{1}\tr\m{x} = u$ or $\m{1}\tr\m{x} = l$, then we are done since
$l$ and $u$ are integers;
hence, we assume that $l < \m{1}\tr \m{x} < u$.
If all components of $\m{x}$ are binary, then we are done,
so suppose that there exists a nonbinary component $x_i$.
Since $a_{ii} = 0$, a Taylor expansion of $f$ gives
\[
f(\m{x} + \alpha \m{e}_i) = f (\m{x}) + \alpha \nabla f(\m{x})_i
- \alpha^2 d_{ii} ,
\]
where $\m{e}_i$ is the $i$-th column of the identity matrix.
The quadratic term in the expansion is nonpositive since $d_{ii} \ge 0$.
If the first derivative term is negative, then increase $\alpha$ above 0
until either $x_i + \alpha$ becomes 1 or
$\m{1}\tr\m{x} + \alpha$ is an integer.
Since the first derivative term is negative and $\alpha > 0$,
$f(\m{x} + \alpha \m{e}_i) < f (\m{x})$.
If $\m{1}\tr\m{x} + \alpha$ becomes an integer, then we are done.
If $x_i + \alpha$ becomes 1, then we reach a point $\m{x}_1$
with one more binary component and with an objective function value
no larger than $f(\m{x})$.
If the first derivative term is nonnegative, then decrease $\alpha$ below 0
until either $x_i + \alpha$ becomes 0 or
$\m{1}\tr\m{x} + \alpha$ is an integer.
Since the first derivative term is nonnegative and $\alpha < 0$,
$f(\m{x} + \alpha \m{e}_i) \le f (\m{x})$.
If $\m{1}\tr\m{x} + \alpha$ becomes an integer, then we are done.
If $x_i + \alpha$ becomes 0, then we reach a point $\m{x}_1$
with one more binary component and with a smaller value for the cost function.
In this latter case, we choose another nonbinary component of $\m{x}_1$ and
repeat the process.
Hence, there is no loss of generality in assuming that $\m{1}\tr\m{x}$ is
an integer.

Suppose that $\m{x}$ is not binary.
Since $\m{1}\tr\m{x}$ is an integer,
$\m{x}$ must have at least two nonbinary components,
say $x_i$ and $x_j$.
Again, expanding $f$ is a Taylor series gives
\[
f(\m{x} + \alpha (\m{e}_i-\m{e}_j)) = f (\m{x})
+ \alpha (\nabla f(\m{x})_i - \nabla f(\m{x})_j) +
\alpha^2 (2a_{ij}- d_{ii} -d_{jj}) .
\]
By (\ref{d-condition}), the quadratic term is
nonpositive for any choice of $\alpha$.
If the first derivative term is negative, then we
increase $\alpha$ above 0 until either $x_i + \alpha$
reaches 1 or $x_j - \alpha$ reach 0.
Since the first derivative term is negative and $\alpha > 0$,
we have $f(\m{x} + \alpha (\m{e}_i-\m{e}_j)) < f (\m{x})$.
If the first derivative term is nonnegative, then we
decrease $\alpha$ below 0 until either $x_i + \alpha$
reaches 0 or $x_j - \alpha$ reach 1.
Since the first derivative term is nonnegative and $\alpha < 0$,
it follows that $f(\m{x} + \alpha (\m{e}_i-\m{e}_j)) \le f (\m{x})$.
In either case, the value of the cost function does not increase,
and we reach a feasible point $\m{x}_1$ with $\m{1}\tr\m{x}_1$
integer and with at least one more binary component.
If $\m{x}_1$ is not binary, then $\m{x}_1$ must have at
least two nonbinary components; hence, the adjustment
process can be continued until all the components of $\m{x}$ are binary.
These adjustments to $\m{x}$ do not increase the value of the cost
function and we only alter the fractional components of $\m{x}$.
This completes the proof.
\end{proof}
\smallskip

\section{Convex lower bounds for the objective function}
\label{LowerBound}
We compute an exact solution to the continuous formulation (\ref{Q})
of graph partitioning problem using a branch and bound algorithm.
The bounding process requires a lower bound for the objective
function when restricted to the intersection of a box and two half spaces.
This lower bound is obtained by
writing the objective function as the sum of a convex and a concave function
and by replacing the concave part by the best affine underestimate.
Two different strategies are given for decomposing the objective function.


\subsection{Lower bound based on minimum eigenvalue}
\label{mineig}
Let us decompose the objective function
$f(\m{x}) = (\m{1} - \m{x} )\tr (\m{A}+\m{D}) \m{x}$ in the
following way:
\[
f(\m{x}) = (f(\m{x}) + \sigma \|\m{x}\|^2) - \sigma \|\m{x}\|^2,
\]
where $\sigma$ is the maximum of 0 and the largest eigenvalue
of $\m{A} + \m{D}$.
This represents a DC (difference convex) decomposition (see \cite{HPT95})
since $f(\m{x}) + \sigma \|\m{x}\|^2$ and $\sigma \|\m{x}\|^2$ are both convex.
The concave term $- \|\m{x}\|^2$
is underestimated by an affine function $\ell$ to obtain
a convex underestimate $f_{L}$ of $f$ given by
\begin{equation}\label{fL1}
f_{L}(\m{x}) = \left( f (\m{x}) + \sigma \|\m{x}\|^2 \right) + \sigma \ell
(\m{x}). \nonumber
\end{equation}
We now consider the problem of finding the best
affine underestimate $\ell$ for the concave function
$-\|\m{x}\|^2$ over a given compact, convex set denoted $\C{C}$.
The set of affine underestimators for $-\|\m{x}\|^2$ is given by
\[
\C{S}_1 = \{ \ell: \mathbb{R}^n \rightarrow \mathbb{R}
\mbox{ such that } \ell
\mbox{ is affine and } - \|\m{x}\|^2 \ge \ell(\m{x})
\mbox{ for all } \m{x} \in \C{C} \} .
\]
The best affine underestimate is a solution of the problem
\begin{equation}\label{linearest}
\min_{\ell \in \C{S}_1} \;\; \max_{\m{x}\in \C{C}} \;\; - \left(
\|\m{x}\|^2 + \ell (\m{x}) \right) .
\end{equation}
The following result generalizes Theorem 3.1 in \cite{HagerPhan09}
where we determine the best affine underestimate
for $-\|\m{x}\|^2$ over an ellipsoid.
\smallskip

\begin{theorem}
\label{UnderTheorem1}
Let $\C{C} \subset \mathbb{R}^n$ be a compact, convex set and
let $\m{c}$ be the center and $r$ be the radius of the smallest
sphere containing $\C{C}$.
This smallest sphere is unique and a solution of $(\ref{linearest})$ is
\[
\ell^* (\m{x}) = -2\m{c}\tr\m{x} + \|\m{c}\|^2 - r^2 .
\]
Furthermore,
\[
\min_{\ell \in \C{S}_1} \;\; \max_{\m{x}\in \C{C}} \;\; - \left(
\|\m{x}\|^2 + \ell^* (\m{x}) \right) = r^2.
\]
\end{theorem}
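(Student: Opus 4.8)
The plan is to show that the minimax problem (\ref{linearest}) is solved by reducing it to the geometric problem of finding the smallest enclosing sphere of $\C{C}$. First I would note that for any affine $\ell(\m{x}) = -2\m{b}\tr\m{x} + \beta$, completing the square gives
\[
-\|\m{x}\|^2 - \ell(\m{x}) = -\|\m{x}\|^2 + 2\m{b}\tr\m{x} - \beta
= \|\m{x} - \m{b}\|^2 - \|\m{b}\|^2 - \beta .
\]
Hence $\max_{\m{x}\in\C{C}} \, (-\|\m{x}\|^2 - \ell(\m{x})) = \left(\max_{\m{x}\in\C{C}} \|\m{x}-\m{b}\|^2\right) - \|\m{b}\|^2 - \beta$. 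For $\ell$ to lie in $\C{S}_1$ we need this quantity to be $\le 0$, i.e. $\beta \ge \left(\max_{\m{x}\in\C{C}} \|\m{x}-\m{b}\|^2\right) - \|\m{b}\|^2$. So for a fixed linear part $\m{b}$, the infimum over admissible $\beta$ of the objective is attained when $\beta$ is as small as possible, and then the objective value equals $0$; but we also want the \emph{best} underestimate, which by the standard convention means we instead want to minimize the \emph{worst-case gap}, and I would reformulate (\ref{linearest}) accordingly. More precisely, rewrite (\ref{linearest}): for each $\m{b}$ we must take $\beta$ exactly at its lower bound (taking it larger only worsens the max), so the objective as a function of $\m{b}$ alone becomes $\max_{\m{x}\in\C{C}}\|\m{x}-\m{b}\|^2 - \|\m{b}\|^2 - \beta(\m{b})$ with $\beta(\m{b})$ at the threshold — wait, that gives $0$. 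The correct reading is that $\C{S}_1$ should be taken as a closed set and the quantity being minimized in (\ref{linearest}) is the gap; so I would instead keep $\beta$ free and observe that the objective $\max_{\m{x}\in\C{C}} \|\m{x}-\m{b}\|^2 - \|\m{b}\|^2 - \beta$ is to be minimized over \emph{all} $(\m{b},\beta)$ with $\ell\in\C{S}_1$. Since making $\beta$ larger decreases the objective but the constraint $\ell\in\C{S}_1$ forces $\beta$ down, at optimality the constraint is active: $\beta = \max_{\m{x}\in\C{C}}\|\m{x}-\m{b}\|^2 - \|\m{b}\|^2$, giving objective $0$ — so the true content is subtler, and I would handle the constraint correctly by observing that actually the set $\C{S}_1$ already forces $-\|\m{x}\|^2 \ge \ell(\m{x})$ pointwise, so $\max_{\m{x}\in\C{C}}(-\|\m{x}\|^2-\ell(\m{x})) \ge 0$ always, and we seek the $\ell$ making this as small as possible while staying $\ge$ it everywhere; the minimizing $\ell$ is the one where $\beta$ is pushed to its largest value consistent with feasibility — but feasibility is exactly $\beta \le \min_{\m{x}\in\C{C}}(-\|\m{x}\|^2 + 2\m{b}\tr\m{x}) = -\max_{\m{x}\in\C{C}}\|\m{x}-\m{b}\|^2 + \|\m{b}\|^2$. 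Taking $\beta$ at this upper bound, the objective becomes $\max_{\m{x}\in\C{C}}\|\m{x}-\m{b}\|^2 - \min_{\m{x}\in\C{C}}\|\m{x}-\m{b}\|^2$... this is not quite $r^2$ either.

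Let me restart the logic cleanly, which is the way I would actually write it. For $\ell(\m{x}) = -2\m{b}\tr\m{x}+\beta$, membership $\ell\in\C{S}_1$ is equivalent to $\beta \le \|\m{b}\|^2 - \|\m{x}-\m{b}\|^2$ for all $\m{x}\in\C{C}$, i.e. $\beta \le \|\m{b}\|^2 - R(\m{b})^2$ where $R(\m{b}) := \max_{\m{x}\in\C{C}}\|\m{x}-\m{b}\|$. The inner maximum in (\ref{linearest}) is
\[
\max_{\m{x}\in\C{C}}\big(-\|\m{x}\|^2-\ell(\m{x})\big)
= \max_{\m{x}\in\C{C}}\|\m{x}-\m{b}\|^2 - \|\m{b}\|^2 - \beta
= R(\m{b})^2 - \|\m{b}\|^2 - \beta .
\]
To minimize this over feasible $(\m{b},\beta)$, take $\beta$ at its maximal feasible value $\beta = \|\m{b}\|^2 - R(\m{b})^2$, which yields inner value $R(\m{b})^2 - \|\m{b}\|^2 - \|\m{b}\|^2 + R(\m{b})^2$ — no. With $\beta = \|\m{b}\|^2 - R(\m{b})^2$ we get $R(\m{b})^2 - \|\m{b}\|^2 - (\|\m{b}\|^2 - R(\m{b})^2) = 2R(\m{b})^2 - 2\|\m{b}\|^2$, still wrong. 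The resolution: $\beta$ enters the inner value with a \emph{minus} sign, so to \emph{minimize} the inner value we want $\beta$ \emph{large}, hence $\beta = \|\m{b}\|^2 - R(\m{b})^2$ is correct, but then I have mis-substituted. Redo: inner value $= R(\m{b})^2 - \|\m{b}\|^2 - \beta$; with $\beta = \|\m{b}\|^2 - R(\m{b})^2$: inner $= R(\m{b})^2 - \|\m{b}\|^2 - \|\m{b}\|^2 + R(\m{b})^2 = 2(R(\m{b})^2 - \|\m{b}\|^2)$. For this to be correct we'd need... Actually the clean fact is: with $\beta$ at its max, $-\|\m{x}\|^2 - \ell(\m{x}) = \|\m{x}-\m{b}\|^2 - R(\m{b})^2 \le 0$... that's an underestimate of $0$, not what we want. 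The sign convention in $\C{S}_1$ versus the objective is the crux, and \textbf{this sign/convention bookkeeping is the main obstacle} — I would pin it down by testing on $\C{C}$ a single point, where $r=0$ and the answer must be $0$.

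Once the reformulation is correct, the argument is: (\ref{linearest}) reduces to $\min_{\m{b}} R(\m{b})^2 = \min_{\m{b}} \max_{\m{x}\in\C{C}}\|\m{x}-\m{b}\|^2$, whose optimal value is by definition $r^2$, attained at $\m{b} = \m{c}$, the center of the smallest enclosing sphere. The corresponding optimal affine function is $\ell^*(\m{x}) = -2\m{c}\tr\m{x} + \beta^*$ with $\beta^* = \|\m{c}\|^2 - r^2$, matching the statement. For uniqueness of the smallest sphere: suppose two spheres with centers $\m{c}_1 \ne \m{c}_2$ both have radius $r$ and contain $\C{C}$; then $\C{C}$ lies in their intersection, which is contained in a ball of radius $\sqrt{r^2 - \|\m{c}_1-\m{c}_2\|^2/4}$ centered at $(\m{c}_1+\m{c}_2)/2$, contradicting minimality of $r$. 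Existence follows from compactness: $R(\m{b})$ is continuous, coercive in $\m{b}$ (since $\C{C}$ is nonempty compact), so it attains its minimum. I would present existence and uniqueness first as a short lemma-style paragraph, then do the completing-the-square reduction, then read off $\ell^*$ and the optimal value.
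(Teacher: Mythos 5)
Your reduction never reaches a correct form, and the final step papers over the real difficulty. First, the algebra: $-\|\m{x}\|^2+2\m{b}\tr\m{x}-\beta = \|\m{b}\|^2-\|\m{x}-\m{b}\|^2-\beta$, not $\|\m{x}-\m{b}\|^2-\|\m{b}\|^2-\beta$, and membership in $\C{S}_1$ requires this quantity to be $\ge 0$ for every $\m{x}\in\C{C}$ (the gap $-\|\m{x}\|^2-\ell(\m{x})$ is nonnegative), not that its maximum be $\le 0$. Carried out correctly: the constraint forces $\beta\le\|\m{b}\|^2-R(\m{b})^2$ with $R(\m{b})=\max_{\m{x}\in\C{C}}\|\m{x}-\m{b}\|$, the worst-case gap for fixed $(\m{b},\beta)$ is $\|\m{b}\|^2-\beta-\min_{\m{x}\in\C{C}}\|\m{x}-\m{b}\|^2$, and taking $\beta$ at its maximal feasible value gives $R(\m{b})^2-\mathrm{dist}(\m{b},\C{C})^2$. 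This is exactly the quantity you derived mid-proposal and discarded as ``not quite $r^2$''; it is the correct reduced objective, so (\ref{linearest}) equals $\min_{\m{b}}\,\bigl[R(\m{b})^2-\mathrm{dist}(\m{b},\C{C})^2\bigr]$, not $\min_{\m{b}}R(\m{b})^2$. Your concluding claim that the problem reduces ``by definition'' to the smallest enclosing sphere is therefore false as stated: for $\m{b}\notin\C{C}$ the objective is strictly smaller than $R(\m{b})^2$, and nothing you wrote rules out that it drops below $r^2$.

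The missing content is precisely the inequality $R(\m{b})^2-\mathrm{dist}(\m{b},\C{C})^2\ge r^2$ for all $\m{b}$, together with the fact that $\m{c}\in\C{C}$ (so that $\m{b}=\m{c}$ attains the value $r^2$). Neither is free: one route is to prove $R(\m{b})^2\ge r^2+\|\m{b}-\m{c}\|^2$, which relies on an optimality property of the center (e.g.\ that $\m{c}$ lies in the convex hull of the points of $\C{C}$ on the bounding sphere, or a variational perturbation of the center), and then use $\mathrm{dist}(\m{b},\C{C})\le\|\m{b}-\m{c}\|$, which again needs $\m{c}\in\C{C}$. The paper gets the lower bound differently: it first shows a minimizer $\ell$ of (\ref{linearest}) exists, then at a maximizer $\m{z}$ of the gap uses the first-order optimality conditions over the convex set $\C{C}$ to conclude $-\|\m{z}\|^2-\ell(\m{z})\ge\|\m{x}-\m{z}\|^2$ for all $\m{x}\in\C{C}$, hence the optimal value is at least $r^2$; and it proves $\m{c}\in\C{C}$ by a separating-hyperplane argument so that $\ell^*$ attains $r^2$. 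Your existence argument (continuity and coercivity of $R$) and your uniqueness argument (two radius-$r$ balls with distinct centers force $\C{C}$ into a ball of radius $\sqrt{r^2-\|\m{c}_1-\m{c}_2\|^2/4}$) are sound, but the core of the theorem --- the lower bound $r^2$ over all affine underestimators and the attainment at $\m{c}\in\C{C}$ --- is absent, and you flagged the unresolved sign bookkeeping yourself.
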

\smallskip

\begin{proof}
To begin, we will show that the minimization in (\ref{linearest})
can be restricted to a compact set. Clearly, when carrying out the
minimization in (\ref{linearest}), we should restrict our attention
to those $\ell$ which touch the function $h(\m{x}) := -\|\m{x}\|^2$
at some point in $\C{C}$. Let $\m{y} \in \C{C}$ denote the point of
contact. Since $h(\m{x}) \ge \ell (\m{x})$ and $h (\m{y}) = \ell
(\m{y})$, a lower bound for the error $h (\m{x}) - \ell (\m{x})$
over $\m{x} \in \C{C}$ is
\[
h (\m{x}) - \ell (\m{x}) \ge |\ell (\m{x}) - \ell (\m{y})| -
|h(\m{x}) - h(\m{y})| .
\]
If $M$ is the difference between the maximum and minimum value of
$h$ over $\C{C}$, then we have
\begin{equation}\label{change}
h (\m{x}) - \ell (\m{x}) \ge |\ell (\m{x}) - \ell (\m{y})| - M.
\end{equation}

An upper bound for the minimum in (\ref{linearest}) is obtained by
the linear function $\ell_0$ which is constant on $\C{C}$, with
value equal to the minimum of $h(\m{x})$ over $\m{x} \in \C{C}$. If
$\m{w}$ is a point where $h$ attains its minimum over $\C{C}$, then
we have
\[
\max_{\m{x} \in \C{C}} \;\; h (\m{x}) - \ell_0 (\m{x}) = \max_{\m{x} \in
\C{C}} \;\; h (\m{x}) - h (\m{w}) = M.
\]
Let us restrict our attention to the linear functions $\ell$
which achieve an
objective function value in (\ref{linearest}) which is at least as
small as that of $\ell_0$.
For these $\ell$ and for $\m{x} \in \C{C}$, we have
\begin{equation}\label{ell0}
h (\m{x}) - \ell (\m{x}) \le \max_{\m{x}\in \C{C}} \;\; h (\m{x}) - \ell
(\m{x}) \le \max_{\m{x}\in \C{C}} \;\; h (\m{x}) - \ell_0 (\m{x}) = M .
\end{equation}
Combining (\ref{change}) and (\ref{ell0}) gives
\begin{equation}\label{2M}
|\ell (\m{x}) - \ell (\m{y})| \le 2M .
\end{equation}
Thus, when we carry out the minimization in (\ref{linearest}), we
should restrict our attention to linear functions which touch $h$ at some point
$\m{y} \in \C{C}$ and with the change in $\ell$ across $\C{C}$
satisfying the bound (\ref{2M}) for all $\m{x}\in \C{C}$. This tells
us that the minimization in (\ref{linearest}) can be restricted to a
compact set, and that a minimizer must exist.

Suppose that $\ell$ attains the minimum in (\ref{linearest}). Let
$\m{z}$ be a point in $\C{C}$ where $h (\m{x}) - \ell (\m{x})$
achieves its maximum. A Taylor expansion around $\m{x} = \m{z}$
gives
\[
h (\m{x}) - \ell (\m{x}) = h (\m{z}) - \ell (\m{z})  + (\nabla
h(\m{z}) - \nabla \ell)(\m{x}-\m{z}) - \|\m{x} - \m{z}\|^2 .
\]
Since $\ell \in \C{S}_1$,
$h (\m{x}) - \ell (\m{x}) \ge 0$ for all $\m{x} \in \C{C}$.
It follows that
\begin{equation}\label{flx}
 h (\m{z}) - \ell (\m{z})  \ge -(\nabla h(\m{z}) - \nabla \ell)(\m{x}-\m{z})
+ \|\m{x} - \m{z}\|^2 .
\end{equation}
Since $\C{C}$ is convex,
the first-order optimality conditions for $\m{z}$ give
\[
(\nabla h(\m{z}) - \nabla \ell)(\m{x}-\m{z}) \le 0
\]
for all $\m{x} \in \C{C}$.
It follows from (\ref{flx}) that
\begin{equation}\label{diameter}
h (\m{z}) - \ell (\m{z})  \ge \|\m{x} - \m{z}\|^2
\end{equation}
for all $\m{x} \in \C{C}$.
There exists $\m{x} \in \C{C}$ such that
$\|\m{x} - \m{z}\| \ge r$
or else $\m{z}$ would be the center of a smaller sphere
containing $\C{C}$.
Hence, (\ref{diameter}) implies that
\[
h (\m{z}) - \ell (\m{z})  \ge r^2.
\]
It follows that
\begin{equation}\label{l_lower}
\max_{\m{x} \in \C{C}} \;\; h(\m{x}) - \ell (\m{x}) \ge
h(\m{z}) - \ell (\m{z}) \ge r^2.
\end{equation}

We now observe that for the specific linear function $\ell^*$ given
in the statement of the theorem, (\ref{l_lower}) becomes an
equality, which implies the optimality of $\ell^*$ in
(\ref{linearest}).
Expand $h$ in a Taylor series around $\m{x} = \m{c}$ to obtain
\begin{eqnarray}
h(\m{x}) &=& -\|\m{c}\|^2 - 2\m{c}\tr(\m{x}-\m{c}) -\|\m{x}-\m{c}\|^2
\nonumber \\
&=& -2\m{c}\tr\m{x} + \|\m{c}\|^2 - \|\m{x}-\m{c}\|^2.  \nonumber
\end{eqnarray}
Subtract $\ell^* (\m{x}) = -2\m{c}\tr\m{x} + \|\m{c}\|^2 - r^2$ from
both sides to obtain
\begin{equation}\label{h-l}
h(\m{x}) - \ell^* (\m{x}) = r^2 - \|\m{x}-\m{c}\|^2 .
\end{equation}
If $\m{c} \in \C{C}$, then the maximum in (\ref{h-l})
over $\m{x} \in \C{C}$ is attained by $\m{x} = \m{c}$ for which
\[
h(\m{c}) - \ell^* (\m{c}) = r^2.
\]
Consequently, (\ref{l_lower}) becomes an equality for $\ell =
\ell^*$, which implies the optimality of $\ell^*$ in
(\ref{linearest}).

We can show that $\m{c} \in \C{C}$ as follows:
Suppose $\m{c} \not\in \C{C}$.
Since $\C{C}$ is compact and convex, there exists a hyperplane $\C{H}$
strictly separating $\m{c}$ and $\C{C}$ -- see Figure \ref{c_in_C}
\begin{figure}
\includegraphics[scale=.4]{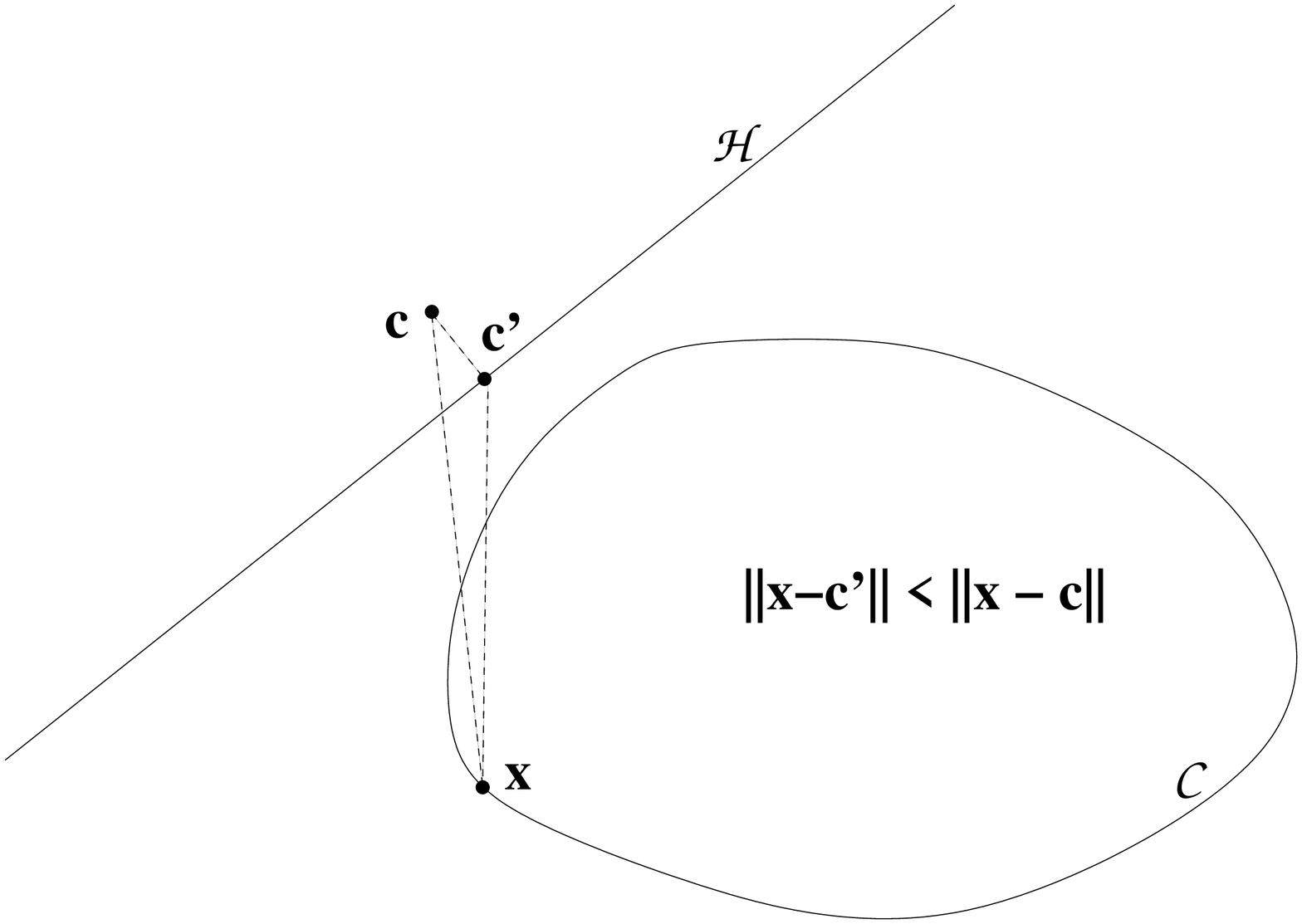}
\caption{Suppose $\m{c} \not\in \C{C}$}
\label{c_in_C}
\end{figure}
If $\m{c}'$ is the projection of $\m{c}$ onto $\C{H}$,
then
\begin{equation}\label{circle}
\|\m{x} - \m{c}'\| < \|\m{x} - \m{c}\| \quad \mbox{for all }\m{x} \in \C{C} .
\end{equation}
Let $\m{x}' \in \C{C}$ be the point
which is farthest from $\m{c}'$ and
let $\m{x} \in \C{C}$ be the point farthest from $\m{c}$.
Hence, $\|\m{x} - \m{c}\| = r$.
By (\ref{circle}), we have
$\|\m{x}' - \m{c}'\| < \|\m{x} - \m{c}\| = r$;
it follows that the
sphere with center $\m{c}'$ and radius $\|\m{x}' - \m{c}'\|$ contains
$\C{C}$ and has radius smaller than $r$.
This contradicts the assumption that $r$ was the sphere of smallest
radius containing $\C{C}$.

The uniqueness of the smallest sphere containing $\C{C}$ is as follows:
Suppose that there exist two different smallest spheres
$\C{S}_1$ and $\C{S}_2$ containing $\C{C}$.
Let $\C{S}_3$ be the smallest sphere containing
$\C{S}_1 \cap \C{S}_2$.
Since the diameter of the intersection is strictly less than the
diameter of $\C{S}_1$ or $\C{S}_2$, we contradict the assumption that
$\C{S}_1$ and $\C{S}_2$ were spheres of smallest radius containing $\C{C}$.
\end{proof}

\begin{remark}
\label{rem0}
Although the smallest sphere containing $\C{C}$ in Theorem
\ref{UnderTheorem1} is unique, the best linear underestimator of
$h(\m{x}) = -\|\m{x}\|^2$ is not unique.
For example, suppose $\m{a}$ and $\m{b} \in {\mathbb{R}}^n$ and
$\C{C}$ is the line segment
\[
\C{C} = \{ \m{x} \in \mathbb{R}^n : \m{x} = \alpha \m{a} + (1-\alpha)\m{b},
\quad \alpha \in [0, 1] \} .
\]
Along this line segment, $h$ is a concave quadratic in one variable.
The best affine underestimate along the line segment corresponds to
the line connecting the ends of the quadratic restricted to the line segment.
Hence, in $\mathbb{R}^{n+1}$,
any hyperplane which contains the points $(h(\m{a}), \m{a})$ and
$(h(\m{b}), \m{b})$ leads to a best affine underestimate.
\end{remark}

\begin{remark}
\label{rem1}
Let $\C{C}$ be the box
\[
\C{B} = \{ \m{x} \in \mathbb{R}^n : \m{p} \le \m{x} \le \m{q} \}.
\]
The diameter of $\C{B}$, the distance between the points in $\C{B}$
with greatest separation, is $\|\m{p} - \m{q}\|$.
Hence, the smallest sphere containing $\C{B}$ has radius at least
$\|\m{p} - \m{q}\|/2$.
If $\m{x} \in \C{B}$, then
\[
|x_i - (p_i + q_i)/2| \le (q_i - p_i)/2
\]
for every $i$.
Consequently, $\|\m{x} - (\m{p} + \m{q})/2\| \le \|\m{p} - \m{q}\|/2$ and
the sphere with center
$\m{c} = (\m{p}+\m{q})/2$ and radius $r = \|\m{p} - \m{q}\|/2$
contains $\C{B}$.
It follows that this is the smallest sphere containing $\C{B}$ since
any other sphere must have radius at least $\|\m{p} - \m{q}\|/2$.
\end{remark}

\begin{remark}
\label{rem2}
Finding the smallest sphere containing $\C{C}$ may not be easy.
However, the center and radius of any sphere containing $\C{C}$
yields an affine underestimate for $\|\m{x}\|^2$ over $\C{C}$.
That is, if $\C{S}$ is a sphere with $\C{C} \subset \C{S}$,
then the best affine underestimate for $-\|\m{x}\|^2$ over
$\C{S}$ is also an affine underestimate for $-\|\m{x}\|^2$ over $\C{C}$.
\end{remark}

%

\subsection{Lower bound based on semidefinite programming}
\label{SDP}

A different DC decomposition of
$f(\m{x}) = (\m{1} - \m{x} )\tr (\m{A}+\m{D}) \m{x}$ is the following:
\[
f(\m{x}) = (f(\m{x}) + \m{x}\tr\g{\Lambda}\m{x}) - \m{x}\tr\g{\Lambda}\m{x} ,
\]
where $\g{\Lambda}$ is a diagonal matrix with $i$-th
diagonal element $\lambda_i \ge 0$.
We would like to make the second term
$\m{x}\tr\g{\Lambda}\m{x}$ as small as possible while keeping
the first term $f(\m{x}) + \m{x}\tr\g{\Lambda}\m{x}$ convex.
This suggests the following semidefinite programming problem
\begin{equation}\label{sdp}
\begin{array}{c}
\mbox{minimize } \; \; \sum_{i=1}^n \lambda_i \\
\rule{0in}{.2in} \mbox{ subject to } \; \g{\Lambda} -(\m{A} + \m{D})
\succeq \m{0}, \quad \g{\Lambda} \succeq \m{0},
\end{array}
\end{equation}
where $\g{\lambda}$ is the diagonal of $\g{\Lambda}$.
If the diagonal of $\m{A} + \m{D}$ is nonnegative,
then the inequality $\g{\Lambda} \succeq \m{0}$ can be dropped
since it is implied by the inequality
$\g{\Lambda} -(\m{A} + \m{D}) \succeq \m{0}$.

As before, we seek the best linear underestimate of the concave
function $- \m{x}\tr\g{\Lambda}\m{x}$ over a compact, convex set $\C{C}$.
If any of the $\lambda_i$ vanish,
then reorder the components of $\m{x}$ so that
$\m{x} = (\m{y}, \m{z})$ where $\m{z}$ corresponds
to the components of $\lambda_i$ that vanish.
Let $\g{\Lambda}_+$ be the principal submatrix of $\g{\Lambda}$ corresponding
to the positive diagonal elements, and define the set
\[
\C{C}_+ = \{ \m{y} : (\m{y},\m{z}) \in \C{C} \mbox{ for some } \m{z} \} .
\]
The problem of finding the best linear underestimate for
$- \m{x}\tr\g{\Lambda}\m{x}$ over $\C{C}$ is essentially equivalent to
finding the best linear underestimate for
$-\m{y}\tr\g{\Lambda}_+\m{y}$ over the $\C{C}_+$.
Hence, there is no loss of generality in assuming that the diagonal
of $\g{\Lambda}$ is strictly positive.
As a consequence of Theorem \ref{UnderTheorem1}, we have
\smallskip

\begin{corollary}
\label{lambda_under}
Suppose the diagonal of $\g{\Lambda}$ is strictly positive and
let $\m{c}$ be the center and $r$ the radius of the unique smallest
sphere containing the set
\[
\g{\Lambda}^{1/2}\C{C} := \{ \g{\Lambda}^{1/2}\m{x}: \m{x} \in \C{C} \}.
\]
The best linear underestimate of
$-\m{x}\tr\g{\Lambda}\m{x}$ over the compact, convex set $\C{C}$ is
\[
\ell^* (\m{x}) =
-2\m{c}\tr\g{\Lambda}^{1/2}\m{x} + \|\m{c}\|^2 - r^2.
\]
Furthermore,
\[
\min_{\ell \in \C{S}_2} \;\; \max_{\m{x}\in \C{C}} \;\; - \left(
\m{x}\tr\g{\Lambda}\m{x} + \ell^* (\m{x}) \right) = r^2,
\]
where
\[
\C{S}_2 = \{ \ell: \mathbb{R}^n \rightarrow \mathbb{R} \mbox{ such
that } \ell \mbox{ is affine and } -\m{x}\tr\g{\Lambda}\m{x} \ge
\ell(\m{x}) \mbox{ for all } \m{x} \in \C{C} \} .
\]
\end{corollary}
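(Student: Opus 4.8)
The plan is to reduce Corollary~\ref{lambda_under} to Theorem~\ref{UnderTheorem1} via the linear change of variables $\m{u} = \g{\Lambda}^{1/2}\m{x}$. Since the diagonal of $\g{\Lambda}$ is strictly positive, $\g{\Lambda}^{1/2}$ is a nonsingular diagonal matrix, so $\m{x} \mapsto \g{\Lambda}^{1/2}\m{x}$ is a linear bijection of $\mathbb{R}^n$ with inverse $\m{u} \mapsto \g{\Lambda}^{-1/2}\m{u}$. First I would record two elementary facts: the set $\C{C}' := \g{\Lambda}^{1/2}\C{C}$ is compact and convex, being the image of the compact convex set $\C{C}$ under a continuous linear map; and the concave function transforms as $-\m{x}\tr\g{\Lambda}\m{x} = -\|\g{\Lambda}^{1/2}\m{x}\|^2 = -\|\m{u}\|^2$.

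Next I would set up the correspondence between the two classes of affine underestimates. Given $\ell \in \C{S}_2$, define $\hat{\ell}(\m{u}) := \ell(\g{\Lambda}^{-1/2}\m{u})$; composing an affine function with an affine map yields an affine function, and the requirement $-\m{x}\tr\g{\Lambda}\m{x} \ge \ell(\m{x})$ for all $\m{x} \in \C{C}$ becomes, after substituting $\m{x} = \g{\Lambda}^{-1/2}\m{u}$, exactly $-\|\m{u}\|^2 \ge \hat{\ell}(\m{u})$ for all $\m{u} \in \C{C}'$; that is, $\hat{\ell}$ lies in the set $\C{S}_1$ of Theorem~\ref{UnderTheorem1} attached to $\C{C}'$. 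Conversely, every such $\hat{\ell}$ arises in this way from $\ell(\m{x}) = \hat{\ell}(\g{\Lambda}^{1/2}\m{x}) \in \C{S}_2$, so the map $\ell \mapsto \hat{\ell}$ is a bijection of $\C{S}_2$ onto the corresponding $\C{S}_1$. Moreover the inner maximands match under this bijection,
\[
\max_{\m{x}\in\C{C}} \; -\bigl( \m{x}\tr\g{\Lambda}\m{x} + \ell(\m{x}) \bigr)
= \max_{\m{u}\in\C{C}'} \; -\bigl( \|\m{u}\|^2 + \hat{\ell}(\m{u}) \bigr),
\]
so the two min--max problems have the same optimal value and their optimal solutions correspond.

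Then I would invoke Theorem~\ref{UnderTheorem1} with the compact convex set $\C{C}'$: the smallest sphere containing $\C{C}'$ is unique, with center $\m{c}$ and radius $r$; the best affine underestimate of $-\|\m{u}\|^2$ over $\C{C}'$ is $\hat{\ell}^*(\m{u}) = -2\m{c}\tr\m{u} + \|\m{c}\|^2 - r^2$; and the optimal value of the min--max problem is $r^2$. Pulling $\hat{\ell}^*$ back through $\m{u} = \g{\Lambda}^{1/2}\m{x}$ produces the best affine underestimate
\[
\ell^*(\m{x}) = \hat{\ell}^*(\g{\Lambda}^{1/2}\m{x}) = -2\m{c}\tr\g{\Lambda}^{1/2}\m{x} + \|\m{c}\|^2 - r^2,
\]
and the optimal value $r^2$ is unchanged by the equality of the maximands above. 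These are precisely the assertions of the corollary.

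I do not anticipate a genuine obstacle here: the content is entirely the transfer of Theorem~\ref{UnderTheorem1} across an invertible linear substitution. The only points that need care are verifying that $\ell \mapsto \hat{\ell}$ is a bijection between $\C{S}_2$ and the $\C{S}_1$ of $\C{C}'$ (this is exactly where strict positivity of the diagonal of $\g{\Lambda}$, hence nonsingularity of $\g{\Lambda}^{1/2}$, is used) and that $\m{x} \mapsto \g{\Lambda}^{1/2}\m{x}$ preserves compactness, convexity, and the objective value so that Theorem~\ref{UnderTheorem1} applies to $\C{C}'$ verbatim. The degenerate case of vanishing $\lambda_i$ has already been reduced away in the discussion preceding the corollary and need not be revisited.
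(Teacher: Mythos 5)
Your proposal is correct and follows essentially the same route as the paper's own proof: the change of variables $\m{u} = \g{\Lambda}^{1/2}\m{x}$, the observation that affine functions and the underestimation property transfer back and forth under this invertible substitution, and an application of Theorem~\ref{UnderTheorem1} to $\g{\Lambda}^{1/2}\C{C}$. Your write-up simply spells out the bijection between $\C{S}_2$ and the corresponding $\C{S}_1$ in more detail than the paper does.
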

\smallskip

\begin{proof}
With the change of variables $\m{y} = \g{\Lambda}^{1/2}\m{x}$, an
affine function in $\m{x}$ is transformed to an affine function in
$\m{y}$ and conversely, an affine function in $\m{y}$ is transformed
to an affine function in $\m{x}$. Hence, the problem of finding the
best affine underestimate for $- \m{x}\tr\g{\Lambda}\m{x}$ over
$\C{C}$ is equivalent to the problem of finding the best affine
underestimate for $-\|\m{y}\|^2$ over $\g{\Lambda}^{1/2}\C{C}$.
Apply Theorem \ref{UnderTheorem1} to the transformed problem in
$\m{y}$, and then transform back to $\m{x}$.
\end{proof}

\begin{remark}
\label{rem3}
If $\C{C}$ is the box
$\{ \m{x} \in \mathbb{R}^n : \m{0} \le \m{x} \le \m{1} \}$,
then $\g{\Lambda}^{1/2}\C{C}$ is also a box to which we can apply
the observation in Remark \ref{rem1}.
In particular, we have
\begin{equation}\label{box_center}
\m{c} = \frac{1}{2} \g{\Lambda}^{1/2}\m{1} =
\frac{1}{2} \g{\lambda}^{1/2}
\quad \mbox{and} \quad
r = \|\g{\Lambda}^{1/2}\m{1}\|/2 = \|\g{\lambda}^{1/2}\|/2 .
\end{equation}
Hence, $\|\m{c}\|^2 - r^2 = 0$ and we have
$\ell^* (\m{x}) = - \g{\lambda}\tr \m{x}$.
\end{remark}

\begin{remark}
\label{rem4}
Let us consider the set
\[
\C{C} = \{ \m{x} \in \mathbb{R}^n : \m{0} \le \m{x} \le \m{1},
\quad \m{1}\tr \m{x} = b \},
\]
where $0 < b < n$.
Determining the smallest sphere containing
$\g{\Lambda}^{1/2}\C{C}$ may not be easy.
However, as indicated in Remark \ref{rem2}, any sphere containing
$\g{\Lambda}^{1/2} \C{C}$ yields an underestimate for
$\m{x}\tr\g{\Lambda}\m{x}$.
Observe that
\[
\g{\Lambda}^{1/2} \C{C} =
\{ \m{y} \in \mathbb{R}^n : \m{0} \le \m{y} \le \g{\lambda}^{1/2},
\quad \m{y}\tr {\g{\lambda}^{-1/2}} = b \} .
\]
As observed in Remark \ref{rem3}, the center $\m{c}$ and radius $r$ of
the smallest sphere $\C{S}$ containing the set
\[
\{ \m{y} \in \mathbb{R}^n : \m{0} \le \m{y} \le \g{\lambda}^{1/2} \}
\]
are given in (\ref{box_center}).
The intersection of this sphere with the hyperplane
$\m{y}\tr {\g{\lambda}^{-1/2}} = b$ is a lower dimensional sphere $\C{S}'$
whose center $\m{c}'$ is the projection of $\m{c}$ onto the hyperplane.
$\C{S}'$ contains $\C{C}$ since $\C{C}$ is contained in both the
original sphere $\C{S}$ and the hyperplane.
With a little algebra, we obtain
\[
\m{c}' = \frac{1}{2} \g{\lambda}^{1/2} +
\left( \frac{b - .5n}{\sum_{i=1}^n \lambda_i^{-1}} \right)
\g{\lambda}^{-1/2}.
\]
By the Pythagorean Theorem,
the radius $r'$ of the lower dimensional sphere $\C{S}'$ is
\[
r' = \sqrt{.25 \left( \sum_{i=1}^n \lambda_i \right)
- \frac{(b-.5n)^2}{\sum_{i=1}^n \lambda_i^{-1}}} .
\]
Hence, by Corollary \ref{lambda_under}, an underestimate of
$-\m{x}\tr\g{\Lambda}\m{x}$ is given by
\[
\ell(\m{x}) = -\g{\lambda}\tr\m{x} +
\left( \frac{n - 2b}{\sum_{i=1}^n \lambda_i^{-1}} \right) \m{1}\tr\m{x}
+ \|\m{c}'\|^2 - (r')^2 .
\]
Since $\m{1}\tr\m{x} = b$ when $\m{x} \in \C{C}$, it can be shown,
after some algebra, that $\ell (\m{x}) = - \g{\lambda}\tr\m{x}$
(all the constants in the affine function cancel).
Hence, the affine underestimate $\ell^*$ computed in Remark \ref{rem3} for
the unit box and the affine underestimate $\ell$ computed in this remark
for the unit box intersect the hyperplane $\m{1}\tr\m{x} = b$
are the same.
\end{remark}
\section{Branch and bound algorithm}
\label{BB}
Since the continuous quadratic program (\ref{Q}) has a binary solution,
the branching process in the branch and bound algorithm is
based on setting variables to 0 or 1 and reducing the problem
dimension (we do not employ bisections of the feasible region
as in \cite{HagerPhan09}).
We begin by constructing a linear ordering of the vertices of
the graph according to an estimate for the difficulty in
placing the vertex in the partition.
For the numerical experiments, the order was based on the
total weight of the edges connecting a vertex to the adjacent vertices.
If two vertices $v_1$ and $v_2$ have weights $w_1$ and $w_2$ respectively,
then $v_1$ precedes $v_2$ if $w_1 > w_2$.

Let $v_1$, $v_2$, $\ldots$, $v_n$ denote the ordered vertices.
Level $i$ in the branch and bound tree corresponds to
setting the $v_i$-th component of $\m{x}$ to the values 0 or 1.
Each leaf at level $i$ represents a specific selection of 0 and 1
values for the $v_1$ through $v_i$-th components of $\m{x}$.
Hence, a leaf at level $i$ has a label of the form
\begin{equation}\label{leaf}
\tau = (b_1, b_2, \ldots, b_i), \quad b_j = 0 \mbox{ or } 1
\mbox{ for } 1 \le j \le i.
\end{equation}
Corresponding to this leaf, the value of the $v_j$-th component of
$\m{x}$ is $b_j$ for $1 \le j \le i$.

Let $\C{T}_k$ denote the branch and bound tree at iteration $k$
and let $\C{E}(\C{T}_k)$ denote the leaves in the tree.
Suppose $\tau \in \C{E}(\C{T}_k)$ lies at level $i$ in $\C{T}_k$
as in (\ref{leaf}).
Let $\m{x}_\tau$ denote the vector gotten by removing
components $v_j$, $1 \le j \le i$, from $\m{x}$.
The $v_j$-th component of $\m{x}$
has the pre-assigned binary value $b_j$ for $1 \le j \le i$.
After taking into account these assigned binary values,
the quadratic problem reduces to a lower dimensional problem
in the variable $\m{x}_\tau$ of the form
\[
\begin{array}{c}
\mbox{minimize } \; \; f_\tau (\m{x}_\tau) \\
\rule{0in}{.2in} \mbox{ subject to } \; \m{0} \le \m{x}_\tau \le \m{1} ,
\;\; l_\tau \le \m{1}\tr \m{x}_\tau \le u_\tau,
\end{array}
\]
where
\[
u_{\tau} = u - \sum_{j=1}^i b_j \quad \mbox{and} \quad
l_{\tau} = l - \sum_{j=1}^i b_j .
\]
Using the techniques developed in Section \ref{LowerBound},
we replace $f_\tau$ by a convex lower bound denoted $f_{\tau}^L$ and
we consider the convex problem
\begin{equation}\label{Ltau}
\begin{array}{c}
\mbox{minimize } \; \; f_{\tau}^L (\m{x}_\tau) \\
\rule{0in}{.2in} \mbox{ subject to } \; \m{0} \le \m{x}_\tau \le \m{1} ,
\;\; l_\tau \le \m{1}\tr \m{x}_\tau \le u_\tau.
\end{array}
\end{equation}
Let $M(\tau)$ denote the optimal objective function value for (\ref{Ltau}).
At iteration $k$, the leaf $\tau \in \C{E}(\C{T}_k)$ for which
$M(\tau)$ is smallest is used to branch to the next level.
If $\tau$ has the form (\ref{leaf}), then the branching processes
generates the two new leaves
\begin{equation}\label{bisect}
(b_1, b_2, \ldots, b_i, 0) \quad \mbox{and} \quad
(b_1, b_2, \ldots, b_i, 1) .
\end{equation}
An illustration involving a 3-level branch and bound tree
appears in Figure~\ref{bbtree}.
\begin{figure}
\includegraphics[scale=.4]{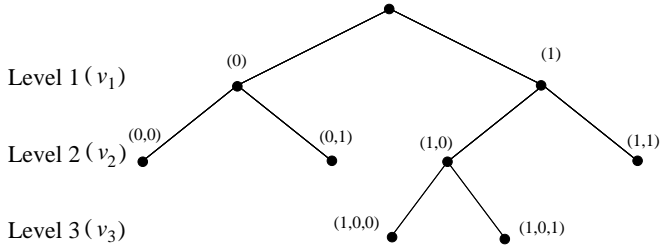}
\caption{Branch and bound tree}
\label{bbtree}
\end{figure}

During the branch and bound process, we must also compute
an upper bound for the minimal objective function value in (\ref{Q}).
This upper bound is obtained using a heuristic technique based
on the gradient projection algorithm and sphere approximations
to the feasible set.
These heuristics for generating an upper bound will be described
in a separate paper.
As pointed out earlier, many heuristic techniques are available
(for example, Metis
(\cite{KarypisKumar98e,KarypisKumar99b,KarypisKumar00}), Chaco
\cite{hendrickson94chaco}, and Party \cite{Preis96theparty}).
An advantage of our quadratic programming based heuristic
is that we start at the solution to the lower bounding problem,
a solution which typically has fractional entries and which is
a feasible starting point for (\ref{Q}).
Consequently, the upper bound is no larger than the objective
function value associated with the optimal point in
the lower-bound problem.
\bigskip

\begin{itemize}
\item[] \hspace{-.2in}\textbf{Convex quadratic branch and bound (CQB)}

\item [1.]
Initialize $\C{T}_0 = \emptyset$ and $k = 0$.
Evaluate both a lower bound for the solution to (\ref{Q}) and
an upper denoted $U_0$.

\item [2.]
Choose $\tau_k \in \C{E}(\C{T}_k)$ such that
$M (\tau_k) =  \min \{ M (\tau) : \tau \in \C{E}(\C{T}_k) \}$.
If $M(\tau_k) = U_k$, then stop, an optimal solution of (\ref{Q})
has been found.

\item[3.]
Assuming that $\tau_k$ has the form (\ref{leaf}), let
$\C{T}_{k+1}$ be the tree obtained by branching at
$\tau_k$ and adding two new leaves as in (\ref{bisect});
also see Figure~\ref{bbtree}.
Evaluate lower bounds for the quadratic programming problems
(\ref{Ltau}) associated with the two new leaves,
and evaluate an improved upper bound, denoted $U_{k+1}$, by using
solutions to the lower bound problems as starting guesses in
a descent method applied to (\ref{Q}).

\item[4.]
Replace $k$ by $k+1$ and return to step 2.
\end{itemize}
\bigskip

Convergence is assured since there are a finite number of
binary values for the components of $\m{x}$.
In the worst case, the branch and bound algorithm will build
all $2^{n+1} - 1$ nodes of the tree.
\smallskip

\section{Necessary and sufficient optimality conditions}
\label{NS}

We use the gradient projection algorithm
to obtain an upper bound for a solution to (\ref{Q}).
Since the gradient projection algorithm can terminate at
a stationary point, we need to be able to distinguish
between a stationary point and a local minimizer, and at a
stationary point which is not a local minimizer,
we need a fast way to compute a descent direction.

We begin by stating the first-order optimality conditions.
Given a scalar $\lambda$, define the vector
\[
\g{\mu}(\m{x},\lambda) = (\m{A}+\m{D})\m{1} - 2(\m{A}+\m{D})\m{x} +
\lambda \m{1},
\]
and the set-valued maps $\C{N} : \mathbb{R} \rightarrow 2^{\mathbb{R}}$
and $\C{M} : \mathbb{R} \rightarrow 2^{\mathbb{R}}$
\[
{\cal N}(\nu) =
\left\{ \begin{array}{cl}
\mathbb{R}  & \mbox{if} \;\; \nu = 0\\
\{ 1 \} & \mbox{if} \;\; \nu < 0\\
\{ 0 \} & \mbox{if} \;\; \nu > 0
\end{array}
\right. ,
\quad
{\cal M}(\nu) =
\left\{ \begin{array}{cl}
\mathbb{R}  & \mbox{if} \;\; \nu = 0\\
\{ u \} & \mbox{if} \;\; \nu > 0\\
\{ l \} & \mbox{if} \;\; \nu < 0
\end{array}
\right.  .
\]
For any vector $\g{\mu}$, $\C{N}(\g{\mu})$ is a vector of sets
whose $i$-component is the set $\C{N}(\mu_i)$.
The first-order optimality (Karush-Kuhn-Tucker) conditions
associated with a local minimizer $\m{x}$ of (\ref{Q}) can be
written in the following way: For some scalar $\lambda$, we have
\begin{equation} \label{KT}
\m{0} \le \m{x} \le \m{1} ,
\quad \m{x} \in {\cal N}(\g{\mu}(\m{x},\lambda)) ,
\quad l \le \m{1}\tr\m{x} \le u , \quad
\mbox{and} \quad \m{1}\tr\m{x} \in \C{M}(\lambda) .
\end{equation}
The first and third conditions in (\ref{KT}) are the constraints in
(\ref{Q}), while the remaining two conditions correspond to
complementary slackness and stationarity of the Lagrangian.

In \cite{HagerKrylyuk99} we give a necessary and sufficient optimality
conditions for (\ref{Q}), which we now review.
Given any $\m{x}$ that is feasible in (\ref{Q}), let us define the sets
\[
{\cal U}(\m{x}) = \{ i : x_i = 1 \}, \quad
{\cal L}(\m{x}) = \{ i : x_i = 0 \}, \quad
\mbox{and}\quad \C{F}(\m{x}) = \{ i: 0 < x_i < 1\} .
\]
We also introduce subsets ${\cal U}_0$ and ${\cal L}_0$ defined by
\[
{\cal U}_0(\m{x},\lambda) =
\{ i \in {\cal U}(\m{x}) : \mu_i(\m{x},\lambda) = 0 \}
\quad \mbox{and} \quad
{\cal L}_0(\m{x},\lambda) =
\{ i \in {\cal L}(\m{x}) : \mu_i(\m{x},\lambda) = 0 \} .
\]
\smallskip

\begin{theorem}
\label{opttheorem}
Suppose that $l = u$ and $\m{D}$ is chosen so that
\begin{equation}\label{a-condition}
d_{ii} + d_{jj} \ge 2a_{ij} .
\end{equation}
for all $i$ and $j$.
A necessary and sufficient condition for $\m{x}$ to be a local minimizer in
$(\ref{Q})$ is that the following all hold:
\begin{itemize}
\item[{\rm (P1)}]
For some $\lambda$,
the first-order conditions $(\ref{KT})$ are satisfied at $\m{x}$.
\item[{\rm (P2)}]
For each $i$ and $j \in {\cal F}(\m{x})$,
we have $d_{ii} + d_{jj} = 2a_{ij}$.
\item[{\rm (P3)}]
Consider the three sets ${\cal U}_0(\m{x},\lambda)$,
${\cal L}_0(\m{x},\lambda)$, and ${\cal F}(\m{x})$. For each $i$ and $j$ in
two different sets, we have $d_{ii} + d_{jj} = 2a_{ij}$.
\end{itemize}
\end{theorem}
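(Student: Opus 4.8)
The plan is to recast problem $(\ref{Q})$ --- with $l=u$, so that $\m{1}\tr\m{x}=l$ is an equality constraint --- as minimizing the quadratic $f$ over the polyhedron $P=\{\m{x}:\m{0}\le\m{x}\le\m{1},\ \m{1}\tr\m{x}=l\}$, and then to invoke the classical second-order characterization of local minimizers of a quadratic over a polyhedron (Majthay, Contesse; see also \cite{HagerKrylyuk99}): a feasible $\m{x}$ is a local minimizer of $f$ over $P$ if and only if (i) the first-order KKT conditions hold, and (ii) $\m{v}\tr\nabla^2 f(\m{x})\m{v}\ge 0$ for every $\m{v}$ in the critical cone $C(\m{x})=\{\m{v}\in T:\nabla f(\m{x})\m{v}=0\}$, where $T$ is the cone of feasible directions at $\m{x}$. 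First I would check that (i) is exactly condition (P1): since $\nabla f(\m{x})\tr=(\m{A}+\m{D})\m{1}-2(\m{A}+\m{D})\m{x}$, we have $\g{\mu}(\m{x},\lambda)=\nabla f(\m{x})\tr+\lambda\m{1}$, the box multiplier and complementarity conditions are precisely $\m{x}\in\C{N}(\g{\mu}(\m{x},\lambda))$, and $\m{1}\tr\m{x}\in\C{M}(\lambda)$ holds automatically because $l=u$. I would also record $\nabla^2 f=-2(\m{A}+\m{D})$ and the identity valid whenever $\m{1}\tr\m{v}=0$ (obtained by using $a_{ii}=0$ and $\sum_i v_i=0$ to eliminate the diagonal of $\m{D}$):
\[
\m{v}\tr\nabla^2 f(\m{x})\m{v}=-2\m{v}\tr(\m{A}+\m{D})\m{v}=2\sum_{i<j}\bigl(d_{ii}+d_{jj}-2a_{ij}\bigr)v_iv_j,
\]
in which, by $(\ref{a-condition})$, every coefficient $d_{ii}+d_{jj}-2a_{ij}$ is nonnegative.

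Next I would pin down the critical cone. A feasible direction satisfies $\m{1}\tr\m{v}=0$, $v_i\ge 0$ for $i\in\C{U}(\m{x})^c\cap\{x_i=0\}=\C{L}(\m{x})$, and $v_i\le 0$ for $i\in\C{U}(\m{x})$. Since $\m{1}\tr\m{v}=0$, we get $\nabla f(\m{x})\m{v}=\g{\mu}(\m{x},\lambda)\tr\m{v}=\sum_i\mu_iv_i$; the indices in $\C{F}(\m{x})\cup\C{U}_0\cup\C{L}_0$ contribute $0$ because $\mu_i=0$ there, while for the remaining indices $\m{x}\in\C{N}(\g{\mu})$ forces the sign of $\mu_i$ so that $\mu_iv_i\ge 0$. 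Hence $\nabla f(\m{x})\m{v}\ge 0$ always, with equality exactly when $v_i=0$ for every $i$ with $\mu_i\ne 0$. Therefore $C(\m{x})$ consists precisely of the $\m{v}$ with $\m{1}\tr\m{v}=0$, $v_i=0$ for $i\notin\C{U}_0\cup\C{L}_0\cup\C{F}(\m{x})$, $v_i\le 0$ on $\C{U}_0$, and $v_i\ge 0$ on $\C{L}_0$ (writing $\C{U}_0,\C{L}_0$ for $\C{U}_0(\m{x},\lambda),\C{L}_0(\m{x},\lambda)$).

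For sufficiency I would assume (P1)--(P3) and evaluate the displayed quadratic form on $\m{v}\in C(\m{x})$: only pairs $i<j$ with both indices in $\C{U}_0\cup\C{L}_0\cup\C{F}(\m{x})$ can contribute, and among these (P2) annihilates the coefficient of every pair lying inside $\C{F}(\m{x})$ while (P3) annihilates the coefficient of every pair straddling two of the three sets; what survives are pairs with $i,j\in\C{U}_0$ (so $v_i,v_j\le 0$, hence $v_iv_j\ge 0$) or $i,j\in\C{L}_0$ (so $v_iv_j\ge 0$), and the surviving coefficients are nonnegative by $(\ref{a-condition})$. Thus $\m{v}\tr\nabla^2 f(\m{x})\m{v}\ge 0$ on $C(\m{x})$, so $\m{x}$ is a local minimizer by (ii). For necessity, (P1) is the standard first-order condition (the constraints are linear, so a constraint qualification holds). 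If (P2) failed at some $i,j\in\C{F}(\m{x})$, then $d_{ii}+d_{jj}-2a_{ij}>0$, and $\m{v}=\m{e}_i-\m{e}_j\in C(\m{x})$ gives $\m{v}\tr\nabla^2 f(\m{x})\m{v}=-2(d_{ii}+d_{jj}-2a_{ij})<0$, violating (ii). If (P3) failed for $i,j$ in two of the sets $\C{U}_0,\C{L}_0,\C{F}(\m{x})$, then choosing $\m{v}=\pm(\m{e}_i-\m{e}_j)$ with the sign picked so that the component indexed in $\C{U}_0$ equals $-1$ and the one indexed in $\C{L}_0$ equals $+1$ (components in $\C{F}(\m{x})$ being unconstrained) again produces $\m{v}\in C(\m{x})$ with $v_iv_j=-1$, whence $\m{v}\tr\nabla^2 f(\m{x})\m{v}=-2(d_{ii}+d_{jj}-2a_{ij})<0$, the same contradiction.

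The identity and the case analysis are routine; the genuine obstacle is justifying the second-order characterization (ii) itself --- that nonnegativity of the curvature on the critical cone is sufficient, not merely necessary, for a local minimizer of a quadratic over a polyhedron. A direct Taylor expansion settles nonnegativity of $f(\m{x}+t\m{v})-f(\m{x})$ only along directions already in $C(\m{x})$; for feasible directions just outside $C(\m{x})$ the first-order term is positive but can be dominated by negative curvature over a range of step sizes that collapses as the direction approaches $C(\m{x})$, so one must exploit the polyhedral structure of $P$ (an induction over its faces, as in the theorems of Majthay and Contesse, or the argument already given in \cite{HagerKrylyuk99}).
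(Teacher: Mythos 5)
The paper never proves Theorem \ref{opttheorem}: it is imported from \cite{HagerKrylyuk99} ("which we now review"), so there is no in-paper argument to match yours against. Judged on its own terms, your proof is correct. The key identity is right: for $\m{1}\tr\m{v}=0$ one has $\sum_{i<j}(d_{ii}+d_{jj})v_iv_j=\sum_i d_{ii}v_i\sum_{j\ne i}v_j=-\sum_i d_{ii}v_i^2$, which together with $a_{ii}=0$ gives $\m{v}\tr\nabla^2 f(\m{x})\m{v}=2\sum_{i<j}(d_{ii}+d_{jj}-2a_{ij})v_iv_j$. Your description of the critical cone ($v_i=0$ off $\C{U}_0\cup\C{L}_0\cup\C{F}$, signed on $\C{U}_0$ and $\C{L}_0$, free on $\C{F}$) is the same computation the paper performs in (\ref{xyz}) inside the proof of Corollary \ref{strict_cor}, and the case analysis — (P2) and (P3) annihilate exactly the pairs that could make $v_iv_j<0$, while (\ref{a-condition}) handles pairs interior to $\C{U}_0$ or to $\C{L}_0$ where $v_iv_j\ge 0$ — is the same mechanism behind the descent directions of Proposition \ref{descent_direction}. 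The one external input is the Majthay--Contesse theorem that nonnegativity of $\m{v}\tr\nabla^2 f(\m{x})\m{v}$ on the critical cone is \emph{sufficient}, not merely necessary, for local optimality of a quadratic over a polyhedron; you correctly isolate this as the only nontrivial step, and it is a standard, citable result, whereas \cite{HagerKrylyuk99} instead gives a self-contained perturbation argument tailored to this problem. Your route buys a short, transparent reduction of (P1)--(P3) to ``first order plus curvature on the critical cone''; the cost is leaning on a second-order QP theorem whose own proof requires the face-by-face induction you allude to. If you wanted the argument fully self-contained you would need to supply that sufficiency proof, but as a proof by reduction to a classical theorem it stands.
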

\smallskip

In treating the situation $l < u$, an additional condition concerning
the dual multipliers $\lambda$ and $\g{\mu}$ in the first-order
optimality conditions (\ref{KT}) enters into the statement of the result:
\begin{itemize}
\item[{\rm (P4)}]
{\it If $\lambda = \mu_i(\m{x},0) = 0$
for some $i$, then $d_{ii} = 0$ in any of the following three cases:}
\begin{itemize}
\item[{\rm (a)}]
$l < \m{1}\tr \m{x} < u$.
\item[{\rm (b)}]
$x_i > 0$ and $\m{1}\tr \m{x} = u$.
\item[{\rm (c)}]
$x_i < 1$ and $\m{1}\tr \m{x} = l$.
\end{itemize}
\end{itemize}
\bigskip

\begin{corollary}
\label{optcorollary}
Suppose that $l < u$ and $\m{D}$ is chosen so that
\begin{equation}\label{strong-d-condition}
d_{ii} + d_{jj} \ge 2a_{ij} \quad \mbox{and} \quad
d_{ii} \ge 0
\end{equation}
for all $i$ and $j$.
A necessary and sufficient condition for $\m{x}$ to be a local minimizer in
$(\ref{Q})$ is that {\rm (P1)}--{\rm (P4)} all hold.
\end{corollary}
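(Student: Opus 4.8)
The plan is to prove the two implications separately, reducing the inequality‑constrained problem to the equality‑constrained one settled in Theorem~\ref{opttheorem} wherever possible, and invoking (P4) only to handle the genuinely new, ``transversal'' perturbations that change $\m 1\tr\m x$. \emph{Necessity.} Suppose $\m x$ is a local minimizer. The constraints of (\ref{Q}) are linear, so the first‑order conditions (\ref{KT}) hold for some $\lambda$; this is (P1). Note $\nabla f(\m x)\tr=\g{\mu}(\m x,0)=\g{\mu}(\m x,\lambda)-\lambda\m 1$. For (P2) and (P3), for each relevant pair $i,j$ one checks that a perturbation $\m x\pm\alpha(\m e_i-\m e_j)$ with small $\alpha>0$ is feasible (it leaves $\m 1\tr\m x$ fixed, and in each configuration --- both indices in $\C F$, or one in $\C U_0$ or $\C L_0$ and the other in $\C F$, or one in $\C U_0$ and one in $\C L_0$ --- at least one sign keeps both components in $[0,1]$); along it $f$ is a one‑variable quadratic with first‑order coefficient $\mu_i(\m x,\lambda)-\mu_j(\m x,\lambda)=0$ (both multipliers vanish in these cases) and second‑order coefficient $2a_{ij}-d_{ii}-d_{jj}\le 0$ by (\ref{strong-d-condition}), so local minimality forces the latter to vanish. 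For (P4), assume $\lambda=\mu_i(\m x,0)=0$, hence $\nabla f(\m x)_i=0$; in each of cases (a), (b), (c) the hypotheses on $x_i$ and on $\m 1\tr\m x$ (using $l<u$) make one of $\m x\pm\alpha\m e_i$ feasible for small $\alpha>0$, where $f(\m x\pm\alpha\m e_i)=f(\m x)-\alpha^2 d_{ii}$, so $d_{ii}\le 0$, hence $d_{ii}=0$ by (\ref{strong-d-condition}).

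\emph{Sufficiency.} Assume (P1)--(P4). For feasible $\m y$ near $\m x$ set $\m d=\m y-\m x$ and use the exact identity $f(\m y)-f(\m x)=\nabla f(\m x)\m d-\m d\tr(\m A+\m D)\m d$. From (\ref{KT}) one has $\mu_i(\m x,\lambda)d_i\ge 0$ for every $i$ (the sign of $d_i$ is forced when $x_i\in\{0,1\}$) and $-\lambda\m 1\tr\m d\ge 0$, so $\nabla f(\m x)\m d=\sum_i\mu_i(\m x,\lambda)d_i-\lambda\m 1\tr\m d\ge 0$; moreover $\sum_i\mu_i(\m x,\lambda)d_i\ge c\,\|\m d_{PN}\|_1$, where $\m d_{PN}$ is the restriction of $\m d$ to the nondegenerate set $\{i:\mu_i(\m x,\lambda)\ne 0\}$ and $c>0$ is the least of the nonzero $|\mu_i(\m x,\lambda)|$. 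Since $\|\m d\|$ is small, every term of $\m d\tr(\m A+\m D)\m d$ that involves $\m d_{PN}$ is $O(\|\m d\|\,\|\m d_{PN}\|_1)$ and is absorbed by this coercive linear term, so it suffices to bound $\m d_Z\tr(\m A+\m D)\m d_Z$ on the degenerate block $Z=\C F\cup\C U_0\cup\C L_0$ against $-\lambda\m 1\tr\m d$, where on $Z$ one has $d_i\le 0$ on $\C U_0$, $d_i\ge 0$ on $\C L_0$, and $d_i$ free on $\C F$.

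If $\lambda\ne 0$, then $\m 1\tr\m x$ sits at an endpoint and $-\lambda\m 1\tr\m d$ is itself strictly positive (and coercive in $|\m 1\tr\m d|$) whenever $\m 1\tr\m d\ne 0$, so negative semidefiniteness of $\m A+\m D$ on $Z$ is needed only for directions with $\m 1\tr\m d_Z=0$; on exactly that cone it follows from Theorem~\ref{opttheorem} applied to (\ref{Q}) with $l$ and $u$ both replaced by $\m 1\tr\m x$ --- its hypotheses are precisely (P1)--(P3), since when $l=u$ the condition $\m 1\tr\m x\in\C M(\lambda)$ is automatic and the sets $\C F,\C U_0,\C L_0$ and the multiplier $\lambda$ are unchanged --- and a small sign‑preserving correction to $\m d_Z$ handles $\m 1\tr\m d_Z\ne 0$ within the absorbed error. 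If $\lambda=0$, then (P4) applies: in case (a) it gives $d_{ii}=0$ for all $i\in Z$, and a short computation using (P2), (P3) and $2a_{ij}\le d_{ii}+d_{jj}$ reduces $\m d_Z\tr(\m A+\m D)\m d_Z$ to $\sum_{i,j\in\C U_0}a_{ij}d_id_j+\sum_{i,j\in\C L_0}a_{ij}d_id_j$, which is $\le 0$ because $a_{ij}\le 0$ there while the sign constraints make $d_id_j\ge 0$; in case (b), (P4) kills $d_{ii}$ on $\C F\cup\C U_0$ and the same computation gives $\m d_Z\tr(\m A+\m D)\m d_Z\le(\m 1\tr\m d_Z)\sum_{i\in\C L_0}d_{ii}d_i$, which is $\le 0$ up to absorbed error since the active constraint $\m 1\tr\m x=u$ forces $\m 1\tr\m d\le 0$ and $\sum_{i\in\C L_0}d_{ii}d_i\ge 0$; case (c) is symmetric with $\C U_0$ in place of $\C L_0$ and $\m 1\tr\m x=l$.

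The delicate part is the $\lambda=0$ step of sufficiency. Conditions (P1)--(P3) alone control only the ``slice'' $\m 1\tr\m d=0$, so one genuinely needs (P4): it supplies exactly the vanishing of $d_{ii}$ on the coordinates that may move as $\m 1\tr\m x$ drifts off its current value, and without it the quadratic form on $Z$ need not be controlled. The main obstacle is therefore getting the signs right in the quadratic‑form identity on $Z$ in each of the three geometric configurations (a), (b), (c), and checking that the terms coupling $\m d_Z$ to $\m d_{PN}$ (whose $\m 1$‑sum need not vanish) are indeed dominated by $c\,\|\m d_{PN}\|_1$ once $\|\m d\|$ is small enough. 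By comparison, the reduction to Theorem~\ref{opttheorem} and the coercivity bookkeeping are routine.
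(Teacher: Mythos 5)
The paper contains no proof of Corollary~\ref{optcorollary}: it is quoted, together with Theorem~\ref{opttheorem}, from \cite{HagerKrylyuk99}, so there is no in-paper argument to compare against. Judged on its own, your proof is essentially correct. The necessity half is routine and right: each of (P2)--(P4) is obtained by testing $f$ along a feasible direction ($\pm(\m{e}_i-\m{e}_j)$ or $\pm\m{e}_i$) on which the first-order term vanishes because the relevant multipliers are zero, so local minimality forces the quadratic coefficient, which is nonpositive by $(\ref{strong-d-condition})$, to vanish. The sufficiency half carries the real content, and your central identity is the right one: writing $a_{ij}=\frac{1}{2}(d_{ii}+d_{jj})-\epsilon_{ij}$ with $\epsilon_{ij}\ge 0$, and $\epsilon_{ij}=0$ except when $i,j$ both lie in $\C{U}_0$ or both in $\C{L}_0$, gives
\[
\m{d}_Z\tr(\m{A}+\m{D})\m{d}_Z \;=\;(\m{1}\tr\m{d}_Z)\sum_{i\in Z}d_{ii}d_i\;-\;\sum_{i\ne j}\epsilon_{ij}d_id_j ,
\]
where the second sum is nonnegative by the sign constraints on $\m{d}$; everything then reduces to controlling $(\m{1}\tr\m{d}_Z)\sum_{i\in Z}d_{ii}d_i$, which is exactly what (P4) supplies when $\lambda=0$ and what the coercive term $-\lambda\m{1}\tr\m{d}$ supplies when $\lambda\ne 0$. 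Two places should be written out rather than asserted. First, Theorem~\ref{opttheorem} is a statement about local minimality, not about a quadratic form; to extract negative semidefiniteness on the cone $\{\m{1}\tr\m{d}_Z=0\}$ you must test the equality-constrained problem with perturbations supported on $Z$ (where the first-order term vanishes) and then verify that a sign-preserving correction for $\m{1}\tr\m{d}_Z\ne 0$ exists in every configuration (it does, but when $Z\subset\C{L}_0$ the correction shrinks $\m{d}_Z$ to zero and the induced error, of size $O(\|\m{d}\|\,|\m{1}\tr\m{d}_Z|)$, must still be absorbed); in fact your displayed identity already handles $\m{1}\tr\m{d}_Z\ne0$ directly, so the detour through Theorem~\ref{opttheorem} is unnecessary. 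Second, the bookkeeping should note that $\m{1}\tr\m{d}_Z$ differs from $\m{1}\tr\m{d}$ by $\m{1}\tr\m{d}_{PN}$, which is bounded by $\|\m{d}_{PN}\|_1$ and hence absorbed by the coercive linear term. With those details filled in, the argument is complete.
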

\bigskip

Based on Theorem \ref{opttheorem} and
Corollary \ref{optcorollary}, we can easily check whether
a given stationary point is a local minimizer.
This is in contrast to the general quadratic programming problem
for which deciding whether a given point is a
local minimizer is NP-hard (see \cite{murty1987,pardalos91}).
We now observe that when $\m{x}$ is a stationary point and when any
of the conditions (P2)--(P4) are violated, then a descent direction
is readily available.
\bigskip

\begin{proposition}
\label{descent_direction}
Suppose that $\m{x}$ is a stationary point for $(\ref{Q})$ and
$(\ref{strong-d-condition})$ holds.
If either {\rm (P2)} or {\rm (P3)} is violated, then
$\m{d} = \m{e}_i - \m{e}_j$, with an appropriate choice of
sign, is a descent direction.
If $l < u$, $\lambda  = 0 = \mu_i(\m{x},0)$, and $d_{ii} > 0$,
then $\m{d} = \m{e}_i$, with an appropriate choice of sign,
is a descent direction in any of the cases {\rm (a)--(c)} of {\rm (P4)}.
\end{proposition}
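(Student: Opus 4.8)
The plan is to reuse the exact second-order Taylor expansions of the quadratic $f$ along a coordinate direction $\m{e}_i$ and along an edge direction $\m{e}_i-\m{e}_j$ that already appear in the proof of Corollary~\ref{move_to_binary}, namely
\[
f(\m{x}+\alpha\m{e}_i)=f(\m{x})+\alpha\nabla f(\m{x})_i-\alpha^2 d_{ii}
\]
and
\[
f(\m{x}+\alpha(\m{e}_i-\m{e}_j))=f(\m{x})+\alpha\bigl(\nabla f(\m{x})_i-\nabla f(\m{x})_j\bigr)+\alpha^2(2a_{ij}-d_{ii}-d_{jj}),
\]
combined with the identity $\g{\mu}(\m{x},\lambda)=\nabla f(\m{x})\tr+\lambda\m{1}$, which at a stationary point gives $\nabla f(\m{x})_i=\mu_i(\m{x},\lambda)-\lambda$. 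The whole argument hinges on showing that, for the indices singled out by a violated condition, the first-order term in the relevant expansion vanishes, so that the (nonpositive) quadratic term controls the sign of $f(\m{x}+\alpha\m{d})-f(\m{x})$; then I only have to orient $\m{d}$ so that a short step stays feasible.

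First I would treat the failure of (P2) or (P3). In that case there is a pair $i,j$ with $d_{ii}+d_{jj}>2a_{ij}$, the two indices lying either both in $\C{F}(\m{x})$ (for (P2)) or in two different members of $\{\C{U}_0(\m{x},\lambda),\C{L}_0(\m{x},\lambda),\C{F}(\m{x})\}$ (for (P3)). I would observe that in each of these situations $\mu_i(\m{x},\lambda)=\mu_j(\m{x},\lambda)=0$: for indices in $\C{U}_0$ or $\C{L}_0$ this is the definition of those sets, while for an index in $\C{F}(\m{x})$ the complementary-slackness condition $\m{x}\in\C{N}(\g{\mu}(\m{x},\lambda))$ from $(\ref{KT})$ forces the multiplier to vanish because $0<x_i<1$. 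Hence $\nabla f(\m{x})_i-\nabla f(\m{x})_j=\mu_i-\mu_j=0$, the second expansion collapses to $f(\m{x}+\alpha(\m{e}_i-\m{e}_j))=f(\m{x})+\alpha^2(2a_{ij}-d_{ii}-d_{jj})$, and this is strictly below $f(\m{x})$ for every small $\alpha\ne0$ because $2a_{ij}-d_{ii}-d_{jj}<0$. Since $\m{1}\tr(\m{e}_i-\m{e}_j)=0$, the constraint $l\le\m{1}\tr\m{x}\le u$ is untouched, so it only remains to choose the sign of $\m{d}=\pm(\m{e}_i-\m{e}_j)$ so that the box constraints survive: a component strictly inside $(0,1)$ tolerates either sign, a component equal to $1$ must be decreased, and a component equal to $0$ must be increased, and for each admissible combination of index types these two requirements are compatible.

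Next I would handle (P4): assume $l<u$, $\lambda=0=\mu_i(\m{x},0)$, and $d_{ii}>0$. Because $\lambda=0$, we have $\mu_i(\m{x},0)=\nabla f(\m{x})_i$, so $\nabla f(\m{x})_i=0$ and the first expansion becomes $f(\m{x}+\alpha\m{e}_i)=f(\m{x})-\alpha^2 d_{ii}<f(\m{x})$ for all small $\alpha\ne0$. Then I would fix the sign of $\m{d}=\pm\m{e}_i$ from the active constraints in the pertinent case of (P4): in case~(a) the linear constraint is slack, so a short step of either sign is admissible and I take the sign dictated by whether $x_i$ is $0$, is $1$, or is interior; in case~(b), where $x_i>0$ and $\m{1}\tr\m{x}=u$, I take $\m{d}=-\m{e}_i$, which keeps $x_i\ge0$ and lowers $\m{1}\tr\m{x}$ to a value still at least $l$ since $l<u$; in case~(c), where $x_i<1$ and $\m{1}\tr\m{x}=l$, I take $\m{d}=\m{e}_i$, which keeps $x_i\le1$ and raises $\m{1}\tr\m{x}$ to a value still at most $u$. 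In every case $\m{x}+\alpha\m{d}$ is feasible while $f$ strictly decreases for all small $\alpha>0$.

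The only genuinely delicate step is this sign bookkeeping: I must verify that the sign forced by an active bound on one coordinate never conflicts with the sign forced by an active bound on the other coordinate or by an active linear constraint. This is where the structural hypotheses do their work---$d_{ii}+d_{jj}\ge2a_{ij}$ from $(\ref{strong-d-condition})$ keeps the quadratic term nonpositive, and $l<u$ in (P4) guarantees there is room to move $\m{1}\tr\m{x}$ off an active bound---and once they are in hand the remaining verification is a routine finite case check.
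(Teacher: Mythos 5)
Your proposal is correct and takes essentially the same approach as the paper: a second-order expansion of the quadratic along $\m{e}_i-\m{e}_j$ (or $\m{e}_i$) whose first-order term vanishes because the relevant multipliers $\mu_i(\m{x},\lambda)$ are zero (by the definitions of $\C{U}_0,\C{L}_0$ or complementary slackness on $\C{F}$, and $\lambda=0$ in the (P4) case), leaving a strictly negative quadratic term by (\ref{strong-d-condition}), with the sign of $\m{d}$ chosen to preserve feasibility. The paper merely organizes the same computation through the Lagrangian expansion (\ref{T}) instead of the identity $\nabla f(\m{x})_i=\mu_i(\m{x},\lambda)-\lambda$, which is only a difference in bookkeeping.
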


\begin{proof}
The Lagrangian $L$ associated with (\ref{Q}) has the form
\begin{equation}\label{lagrangian}
L(\m{x}) = f(\m{x}) + \lambda (\m{1}\tr\m{x} - b)
- \sum_{i \in \C{L}} \mu_i x_i - \sum_{i \in \C{U}} \mu_i(x_i - 1) ,
\end{equation}
where $b = u$ if $\lambda > 0$, $b = l$ if $\lambda < 0$,
and $\g{\mu}$ stands for $\g{\mu}(\m{x}, \lambda)$.
The sets $\C{L}$ and $\C{U}$ denote $\C{L}(\m{x})$ and
$\C{U}(\m{x})$ respectively.
By the first-order optimality conditions (\ref{KT}), we have
$L(\m{x}) = f(\m{x})$ and $\nabla L(\m{x}) = \m{0}$.
Expanding the Lagrangian around $\m{x}$ gives
\[
L(\m{x}+\m{y}) = L(\m{x}) + \nabla L(\m{x})\m{y} + \frac{1}{2}
\m{y}\tr \nabla^2 L(\m{x})\m{y} = f(\m{x}) - \m{y}\tr (\m{A}+\m{D}) \m{y} .
\]
We substitute for $L$ using (\ref{lagrangian}) to obtain
\begin{eqnarray}
f(\m{x}+\m{y}) &=& L(\m{x}+\m{y})
- \lambda(\m{1}\tr(\m{x}+\m{y}) - b)
+ \sum_{i\in{\cal L}} \mu_i(x_i+y_i)
+ \sum_{i\in{\cal U}} \mu_i(x_i +y_i-1)
\nonumber \\
&=& f(\m{x}) - \lambda \m{1}\tr \m{y}
- \m{y}\tr (\m{A}+\m{D}) \m{y}
+ \sum_{i\in{\cal L}} \mu_i y_i + \sum_{i\in{\cal U}} \mu_i y_i . \label{T}
\end{eqnarray}
If (P2) is violated, then
there are indices $i$ and $j \in \C{F}(\m{x})$ such that
$d_{ii} + d_{jj} > 2a_{ij}$.
We insert $\m{y} = \alpha(\m{e}_i - \m{e}_j)$ in (\ref{T}) to obtain
\begin{equation}\label{h53}
f(\m{x} + \alpha(\m{e}_i - \m{e}_j)) = f(\m{x})
+ \alpha^2(2a_{ij} - d_{ii} - d_{jj}) .
\end{equation}
Since the coefficient of $\alpha^2$ is negative,
$\m{d} = \m{e}_i - \m{e}_j$ is a descent direction for the
objective function.
Since $0 < x_i < 1$ and $0 < x_j < 1$, feasibility is preserved
for $\alpha$ sufficiently small.
In a similar manner, if (P3) is violated by indices $i$ and $j$,
then (\ref{h53}) again holds and
$\m{d} = \pm(\m{e}_i - \m{e}_j)$ is again a descent direction where
the sign is chosen appropriately to preserve feasibility.
For example, if $i \in \C{L}_0(\m{x})$ and $j \in \C{U}_0(\m{x})$,
then $x_i = 0$ and $x_j = 1$.
Consequently, $\m{x} + \alpha (\m{e}_i - \m{e}_j)$ is feasible if $\alpha > 0$
is sufficiently small.

Finally, suppose that
$l < u$, $\lambda  = 0 = \mu_i(\m{x},0)$, and $d_{ii} > 0$.
Substituting $\m{y} = \alpha\m{e}_i$ in (\ref{T}) yields
\[
f(\m{x} + \alpha\m{e}_i) = f(\m{x}) - \alpha^2 d_{ii} .
\]
Since the coefficient $d_{ii}$ of $\alpha^2$ is positive,
$\m{d} = \pm \m{e}_i$ is a descent direction.
Moreover, in any of the cases (a)--(c) of (P4),
$\m{x} + \alpha \m{d}$ is feasible for some $\alpha > 0$ with an
appropriate choice of the sign of $\m{d}$.
\end{proof}
\smallskip

We now give a necessary and sufficient condition for a local
minimizer to be strict.
When a local minimizer is not strict, it may be possible to move
to a neighboring point which has the same objective function value
but which is not a local minimizer.
\smallskip

\begin{corollary}
\label{strict_cor} If $\m{x}$ is a local minimizer for $(\ref{Q})$
and $(\ref{strong-d-condition})$ holds, then $\m{x}$  is a strict
local minimizer if and only if the following conditions hold:
\begin{itemize}
\item[{\rm (C1)}]
${\cal F}(\m{x})$ is empty.
\item[{\rm (C2)}]
$\nabla f (\m{x})_i > \nabla f (\m{x})_j$
for every $i \in \C{L}(\m{x})$ and $j \in \C{U}(\m{x})$.
\item[{\rm (C3)}]
If $l < u$, the first-order optimality conditions $(\ref{KT})$
hold for $\lambda = 0$,
and $\C{Z} := \{ i : \nabla f (\m{x})_i = 0 \} \ne \emptyset$, then either
\begin{itemize}
\item[{\rm (a)}]
$\m{1}\tr\m{x} = u$ and $x_i = 0$ for all $i \in \C{Z}$ or
\item[{\rm (b)}]
$\m{1}\tr\m{x} = l$ and $x_i = 1$ for all $i \in \C{Z}$.
\end{itemize}
\end{itemize}
\end{corollary}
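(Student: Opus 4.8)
The plan is to convert strictness into a statement about feasible directions and then read off the answer from the second-order data already contained in Theorem~\ref{opttheorem} and Corollary~\ref{optcorollary}. Since $f$ is quadratic with Hessian $-2(\m{A}+\m{D})$, the exact identity $f(\m{x}+\alpha\m{d})=f(\m{x})+\alpha\nabla f(\m{x})\m{d}-\alpha^2\m{d}\tr(\m{A}+\m{D})\m{d}$ holds, and $\m{x}$ being a local minimizer together with (\ref{strong-d-condition}) yields properties (P1)--(P3), and also (P4) when $l<u$. Because the feasible set is polyhedral, it agrees near $\m{x}$ with $\m{x}+K$, where $K$ is the cone of feasible directions: $d_i\ge 0$ on $\C{L}(\m{x})$, $d_i\le 0$ on $\C{U}(\m{x})$, $\m{1}\tr\m{d}\le 0$ if $\m{1}\tr\m{x}=u$, and $\m{1}\tr\m{d}\ge 0$ if $\m{1}\tr\m{x}=l$. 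I would first show that $\m{x}$ is a strict local minimizer if and only if there is no nonzero $\m{d}\in K$ with $\nabla f(\m{x})\m{d}=0$ and $\m{d}\tr(\m{A}+\m{D})\m{d}=0$; call such a $\m{d}$ a \emph{flat} direction. If a flat direction exists then $\m{x}+\alpha\m{d}$ is feasible, distinct from $\m{x}$, and has the same objective value for small $\alpha>0$, so $\m{x}$ is not strict. Conversely, if $\m{x}$ is not strict, choose a feasible $\m{y}\neq\m{x}$ so close that $[\m{x},\m{y}]$ lies in a neighborhood on which $f\ge f(\m{x})$ and $f(\m{y})=f(\m{x})$; the restriction of $f$ to that segment is a one-variable quadratic attaining its minimum at both endpoints and nowhere below, hence is constant, so $\m{d}=\m{y}-\m{x}$ is a nonzero flat direction in $K$.

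Next I would parametrize the flat directions. For a multiplier $\lambda$ satisfying (\ref{KT}) one has $\mu_i(\m{x},\lambda)=\nabla f(\m{x})_i+\lambda$, hence $\nabla f(\m{x})\m{d}=\sum_{i\in\C{L}(\m{x})\cup\C{U}(\m{x})}\mu_i(\m{x},\lambda)d_i-\lambda\m{1}\tr\m{d}$, a sum of nonnegative terms when $\m{d}\in K$. It vanishes exactly when $\m{d}$ is supported on $\C{F}(\m{x})\cup\C{L}_0(\m{x},\lambda)\cup\C{U}_0(\m{x},\lambda)$, with $d_i\ge 0$ on $\C{L}_0(\m{x},\lambda)$, $d_i\le 0$ on $\C{U}_0(\m{x},\lambda)$, and $\m{1}\tr\m{d}=0$ whenever $\lambda\neq 0$; when $\lambda=0$ is an admissible multiplier I use it, so that the support set is $\C{F}(\m{x})\cup\C{Z}$ with $\C{Z}$ as in (C3). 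For the ``if'' direction, assume (C1)--(C3); by (C1) a flat $\m{d}$ is supported on $\C{L}_0\cup\C{U}_0$. If a nonzero $\lambda$ is in force, then $\m{1}\tr\m{d}=0$, and (C2) forbids $\C{L}_0(\m{x},\lambda)$ and $\C{U}_0(\m{x},\lambda)$ from both being nonempty (otherwise $\nabla f(\m{x})_i=-\lambda=\nabla f(\m{x})_j$ for $i\in\C{L}_0$, $j\in\C{U}_0$, contradicting (C2)), so the sign constraints and $\m{1}\tr\m{d}=0$ leave only $\m{d}=\m{0}$. If $\lambda=0$ is used and $l<u$, then (C3) applies: either $\C{Z}=\emptyset$, or $\m{1}\tr\m{x}=u$ and $\C{Z}\subseteq\C{L}(\m{x})$ (so $\m{d}\ge\m{0}$ while $\m{1}\tr\m{d}\le 0$), or $\m{1}\tr\m{x}=l$ and $\C{Z}\subseteq\C{U}(\m{x})$ (so $\m{d}\le\m{0}$ while $\m{1}\tr\m{d}\ge 0$); each case forces $\m{d}=\m{0}$. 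When $l=u$ feasibility forces $\m{1}\tr\m{d}=0$ and the argument is again closed by (C2). Hence no nonzero flat direction exists and $\m{x}$ is strict.

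For the ``only if'' direction I would exhibit a flat direction whenever a condition fails. If (C1) fails with $|\C{F}(\m{x})|\ge 2$, take $\m{d}=\m{e}_i-\m{e}_j$ for $i,j\in\C{F}(\m{x})$: its linear coefficient $\nabla f(\m{x})_i-\nabla f(\m{x})_j$ vanishes by stationarity and its quadratic coefficient $d_{ii}+d_{jj}-2a_{ij}$ vanishes by (P2); if $|\C{F}(\m{x})|=1$, integrality of $l$ and $u$ forces $l<\m{1}\tr\m{x}<u$, whence $\lambda=0$, $\nabla f(\m{x})_i=0$, and $d_{ii}=0$ by (P4)(a), so $\m{d}=\pm\m{e}_i$ is flat. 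If (C2) fails through $i\in\C{L}(\m{x})$, $j\in\C{U}(\m{x})$ with $\nabla f(\m{x})_i\le\nabla f(\m{x})_j$, then $\m{e}_i-\m{e}_j\in K$, its linear coefficient is $\le 0$, and its quadratic coefficient is $\le 0$ by (\ref{strong-d-condition}); local minimality forces both to vanish, giving a flat direction. If (C3) fails, then $l<u$, $\lambda=0$ is a multiplier, $\C{Z}\neq\emptyset$, and there is $i\in\C{Z}$ with $x_i\neq 0$ when $\m{1}\tr\m{x}=u$, with $x_i\neq 1$ when $\m{1}\tr\m{x}=l$, and arbitrary when $l<\m{1}\tr\m{x}<u$; taking $\m{d}=-\m{e}_i$, $\m{d}=\m{e}_i$, or the sign keeping $\m{x}+\alpha\m{d}$ feasible, we get $\nabla f(\m{x})_i=0$ and $d_{ii}=0$ from the matching clause of (P4), so $\m{d}$ is flat.

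I expect the bookkeeping in the $\lambda=0$, $\m{1}\tr\m{x}\in\{l,u\}$ cases to be the main obstacle: there the equality $\m{1}\tr\m{d}=0$ is no longer forced, feasibility of the candidate $\pm\m{e}_i$ depends on which face of the cardinality constraint $\m{x}$ lies on, admissibility of $\lambda=0$ as a multiplier must be verified, and exactly the right one of the three clauses of (P4) must be invoked. The auxiliary facts that $\m{1}\tr\m{x}$ cannot have exactly one fractional component when it is an integer, and that (C3)(a)--(b) force all of $\C{Z}$ onto one side of the box, are precisely what make these cases come out.
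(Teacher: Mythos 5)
Your necessity argument and your classification of the feasible directions $\m{d}$ with $\nabla f(\m{x})\m{d}=0$ are correct and track the paper's use of (P2)--(P4) closely. The genuine gap is in the preliminary lemma on which you hang the sufficiency half, namely the claim that a non-strict local minimizer admits a nonzero \emph{flat} direction. You argue that if $f(\m{y})=f(\m{x})$ for a feasible $\m{y}$ near $\m{x}$, then the restriction of $f$ to $[\m{x},\m{y}]$ is a one-variable quadratic attaining its minimum at both endpoints and nowhere below, ``hence constant.'' That inference is false for concave quadratics: $q(t)=t(1-t)$ vanishes at $t=0,1$ and is nonnegative on $[0,1]$ without being constant. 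Concavity is not a fringe case here --- along $\m{d}=\m{e}_i-\m{e}_j$ the second derivative of $t\mapsto f(\m{x}+t\m{d})$ is $2(2a_{ij}-d_{ii}-d_{jj})\le 0$ by $(\ref{strong-d-condition})$ --- so a nearby point with $f(\m{y})=f(\m{x})$ can come from a direction whose linear and quadratic coefficients are both strictly positive and happen to cancel at $t=\|\m{y}-\m{x}\|$; such a direction is not flat. The lemma is nevertheless true, but it needs a limiting argument: take feasible $\m{y}_k\to\m{x}$ with $f(\m{y}_k)=f(\m{x})$, write $\m{y}_k=\m{x}+t_k\m{u}_k$ with $\|\m{u}_k\|=1$, observe $\nabla f(\m{x})\m{u}_k=t_k\,\m{u}_k\tr(\m{A}+\m{D})\m{u}_k\ge 0$ and $\to 0$, and pass to a limit $\m{u}$ in the closed polyhedral cone of feasible directions; then $\nabla f(\m{x})\m{u}=0$, $\m{u}\tr(\m{A}+\m{D})\m{u}\ge 0$ from the displayed identity, and $\m{u}\tr(\m{A}+\m{D})\m{u}\le 0$ from local minimality, so $\m{u}$ is flat.

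There is, however, a cheaper repair that you have essentially already written down: your case analysis for the ``if'' direction never uses the vanishing of the quadratic term, so it actually proves the stronger statement that \emph{no} nonzero feasible direction $\m{d}$ satisfies $\nabla f(\m{x})\m{d}=0$, i.e., $\nabla f(\m{x})\m{d}>0$ on the punctured cone. By homogeneity and compactness of the unit sphere in a polyhedral cone this gives $\nabla f(\m{x})\m{d}\ge c\|\m{d}\|$ for some $c>0$, and strictness follows from the exact expansion $f(\m{x}+\m{d})=f(\m{x})+\nabla f(\m{x})\m{d}-\m{d}\tr(\m{A}+\m{D})\m{d}$ with no second-order lemma at all. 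This first-order-growth route is precisely the one the paper takes (it shows $\nabla f(\m{x})\m{y}>0$ for every nonzero feasible increment $\m{y}$ and concludes via a mean-value argument), while your necessity argument by exhibiting directions along which $f$ is exactly constant coincides with the paper's. So the proposal is sound in substance; only the segment-constancy step must be replaced by one of the two fixes above.
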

\smallskip

\begin{proof}
Throughout the proof,
we let $\g{\mu}$, $\C{F}$, $\C{L}$ and $\C{U}$ denote
$\g{\mu}(\m{x}, \lambda)$, $\C{F}(\m{x})$, $\C{L}(\m{x})$, and $\C{U}(\m{x})$
respectively, where $\m{x}$ is a local minimizer for (\ref{Q}) and
the pair $(\m{x},\lambda)$ satisfies the first-order optimality
conditions (\ref{KT}).
To begin, suppose that $\m{x}$ is a strict local minimizer of (\ref{Q}).
That is, $f(\m{y}) > f(\m{x})$ when $\m{y}$ is a feasible point near $\m{x}$.
If ${\cal F}$ has at least two elements,
then by (P2) of Theorem \ref{opttheorem},
$d_{ii} + d_{jj} = 2a_{ij}$ for each $i$ and $j \in {\cal F}$.
Since the first-order optimality conditions (\ref{KT}) hold at $\m{x}$,
it follows from (\ref{h53}) that
\begin{equation}\label{constant_f}
f(\m{x}+\alpha (\m{e}_i - \m{e}_j)) = f(\m{x})
\end{equation}
for all $\alpha$.
Since this violates the assumption that $\m{x}$ is a strict local minimizer,
we conclude that $|{\cal F}| \le 1$.
If $\m{1}\tr\m{x} = u$ or $\m{1}\tr\m{x} = l$, then
since $u$ and $l$ are integers,
it is not possible for $\m{x}$ to have just one fractional component.
Consequently, ${\cal F}$ is empty.
If $l < \m{1}\tr\m{x} < u$, then by complementary slackness, $\lambda = 0$.
Suppose that $|{\cal F}| = 1$ and $i \in {\cal F}$.
By (P4) of Corollary \ref{optcorollary}, $d_{ii} = 0$.
Again, by (\ref{T}) it follows that
\[
f(\m{x}+\alpha \m{e}_i) = f(\m{x})
\]
for all $\alpha$.
This violates the assumption that $\m{x}$ is a strict local
minimizer of (\ref{Q}).
Hence, $\C{F}$ is empty.

By the first-order conditions (\ref{KT}), there exists $\lambda$ such that
\begin{equation}\label{h84}
\mu_i (\m{x},\lambda) \ge 0 \ge \mu_j (\m{x},\lambda)
\end{equation}
for all $i \in {\cal L}$ and $j \in {\cal U}$.
If this inequality  becomes an equality for
some $i \in {\cal L}$ and $j \in {\cal U}$, then
$\mu_i = 0 = \mu_j$,
and by (P3) of Corollary \ref{optcorollary},
we have $d_{ii} + d_{jj} = 2a_{ij}$.
Again, (\ref{constant_f}) violates the assumption that $\m{x}$
is a strict local minimizer.
Hence, one of the inequalities in (\ref{h84}) is strict.
The $\lambda$ on each side of (\ref{h84}) is cancelled to
obtain (C2).

Suppose that $l < u$, $\lambda = 0$,
and $\C{Z} := \{ i : \nabla f (\m{x})_i = 0 \} \ne \emptyset$.
When $\lambda = 0$, we have $\g{\mu}(\m{x},0) = \nabla f (\m{x})$.
Hence, $\C{Z} = \{ i : \mu_i (\m{x},0) = 0 \} \ne \emptyset$.
It follows from (P4) that in any of the cases (a)--(c),
we have $d_{ii} = 0$.
In particular, if $l < \m{1}\tr\m{x} < u$, then by
(\ref{T}), we have $f(\m{x}+\alpha \m{e}_i) = f(\m{x})$
for all $\alpha$.
Again, this violates the assumption that $\m{x}$ is a strict
local minimum.
Similarly, if for some $i \in \C{Z}$, either $x_i > 0$
and $\m{1}\tr\m{x} = u$ or $x_i < 1$ and $\m{1}\tr\m{x} = l$,
the identity $f(\m{x}+\alpha \m{e}_i) = f(\m{x})$
implies that we violate the strict local optimality of $\m{x}$.
This establishes (C3).

Conversely, suppose that $\m{x}$ is a local minimizer and (C1)--(C3) hold.
We will show that
\begin{equation}\label{st}
\nabla f(\m{x})\m{y} > 0 \mbox{ whenever } \m{y} \ne \m{0}
\mbox{ and } \m{x} + \m{y} \mbox{ feasible in } (\ref{Q}).
\end{equation}
As a result, by the mean value theorem,
$f(\m{x}+\m{y}) > f(\m{x})$ when $\m{y}$ is sufficiently small.
Hence, $\m{x}$ is a strict local minimizer.

When $\m{x} + \m{y}$ is feasible in (\ref{Q}), we have
\begin{equation}\label{sign}
y_i \ge 0 \mbox{ for all } i \in \C{L} \mbox{ and }
y_i \le 0 \mbox{ for all } i \in \C{U}.
\end{equation}
%
By the first-order optimality condition (\ref{KT}),
$\mu_i \ge 0$ for all $i \in \C{L}$ and
$\mu_i \le 0$ for all $i \in \C{U}$.
Hence, we have
\begin{equation}\label{xyz}
(\nabla f(\m{x}) + \lambda\m{1}\tr) \m{y} =
\g{\mu}\tr \m{y} = \sum_{i \in \C{L}} \mu_i y_i +
\sum_{i \in \C{U}} \mu_i y_i \ge 0 .
\end{equation}

We now consider three cases.

First, suppose that $\m{1}\tr\m{y} = 0$ and $\m{y} \ne \m{0}$.
By (C1) ${\cal F}$ is empty and
hence, by (\ref{sign}), $y_i > 0$ for some $i \in \C{L}$ and
$y_j < 0$ for some $j \in \C{U}$.
After adding $\lambda$ to each side in the inequality in (C2),
it follows that either
\begin{equation}\label{e1}
\min_{i \in \C{L}} \mu_i \ge 0 > \max_{j \in \C{U}} \mu_j
\end{equation}
or
\begin{equation}\label{e2}
\min_{i \in \C{L}} \mu_i > 0 \ge \max_{j \in \C{U}} \mu_j .
\end{equation}
Combining (\ref{xyz}), (\ref{e1}), and (\ref{e2})
gives $\nabla f(\m{x})\m{y} \ge \mu_i y_i - \mu_j y_j > 0$
since either $\mu_i > 0$ or $\mu_j < 0$, and $y_i > 0 > y_j$.

Second, suppose that $\m{1}\tr\m{y} \ne 0$ and $\lambda \ne 0$.
To be specific, suppose that $\lambda > 0$.
By complementary slackness, $\m{1}\tr\m{x} = u$.
Since $\m{x} + \m{y}$ is feasible in (\ref{Q}) and
$\m{1}\tr\m{y} \ne 0$, we must have $\m{1}\tr\m{y} < 0$.
Hence, by (\ref{xyz}), $\nabla f(\m{x})\m{y} > 0$.
The case $\lambda < 0$ is similar.

Finally, consider the case $\m{1}\tr \m{y} \ne 0$ and $\lambda = 0$.
In this case, we must have $l < u$.
If the set $\C{Z}$ in (C3) is empty, then
$\nabla f(\m{x})_i = \mu_i \ne 0$ for all $i$,
and by (\ref{xyz}), $\nabla f(\m{x})\m{y} > 0$.
If $\C{Z} \ne \emptyset$, then by (C3), either
$\m{1}\tr \m{x} = u$ and $x_i = 0$ for all $i \in \C{Z}$ or
$\m{1}\tr \m{x} = l$ and $x_i = 1$ for all $i \in \C{Z}$.
To be specific, suppose that
$\m{1}\tr \m{x} = u$ and $x_i = 0$ for all $i \in \C{Z}$.
Again, since $\m{x} + \m{y}$ is feasible in (\ref{Q}) and
$\m{1}\tr\m{y} \ne 0$, we have $\m{1}\tr\m{y} < 0$.
If $\C{U} = \emptyset$, then $\m{x} = \m{0}$ since $\C{F} = \emptyset$.
Since $\m{1}\tr\m{y} < 0$, we contradict the feasibility of
$\m{x} + \m{y}$.
Hence, $\C{U} \ne \emptyset$.
Since $\m{1}\tr\m{y} < 0$, there exists $j \in \C{U}$ such that $y_j < 0$.
Since $\C{Z} \subset \C{L}$, it follows from (\ref{e1}) that $\mu_j < 0$.
By (\ref{xyz}) $\nabla f(\m{x})\m{y} \ge \mu_j y_j > 0$.
The case $\m{1}\tr \m{x} = l$ and $x_i = 1$ for all $i \in \C{Z}$ is similar.
This completes the proof of (\ref{st}), and the corollary has been
established.
\end{proof}

\smallskip

\section{Numerical results}
\label{numerics}
We investigate the performance of the branch and bound algorithm
based on the lower bounds in Section \ref{LowerBound} using a series
of test problems. The codes were written in C and the experiments
were conducted on an Intel Xeon Quad-Core X5355 2.66 GHz computer
using the Linux operating system. Only one of the 4 processors was
used in the experiments. To evaluate the lower bound, we solve
(\ref{Ltau}) by the gradient projection method with an exact
linesearch and Barzilai-Borwein steplength \cite{bb88}. The stopping
criterion in our experiments was
\[
\| P(\m{x}_k - \m{g}_k) - \m{x}_k \| \le 10^{-4},
\]
where $P$ denotes the projection onto the feasible set and $\m{g}_k$
is the gradient of the objective function at $\m{x}_k$.
The solution of the semidefinite programming problem (\ref{sdp})
was obtained using Version 6.0.1 of the CSDP code
\cite{Borchers99} available at
\begin{center}
\medskip
$ \mbox{https://projects.coin-or.org/Csdp/} $
\medskip
\end{center}
We compare the performance of our algorithm with results reported by
Karisch, Rendl, and Clausen in \cite{Karisch00} and by Sensen in
\cite{Sensen01}. Since these earlier results were obtained on
different computers, we obtained estimates for the corresponding
running time on our computer using the LINPACK benchmarks
\cite{Dongarra08}. Since our computer is roughly 30 times faster
than the HP~9000/735 used in \cite{Karisch00} and it is roughly 7
times faster than the Sun UltrSPARC-II 400Mhz machine used in
\cite{Sensen01}, the earlier CPU times were divided by 30 and 7
respectively to obtain the estimated running time on our computer.
Note that the same interior-point algorithm that we use, which is
the main routine in the CSDP code, was used to solve the
semidefinite relaxation in \cite{Karisch00}.

The test problems were based on the graph bisection problem
where $l = u = n/2$.
Two different data sets were used for the $\m{A}$ matrices
in the numerical experiments.
Most of the test problems came from the library of
Brunetta, Conforti, and Rinaldi \cite{bcr97} which is available at
\begin{center}
\medskip $\mbox{ftp://ftp.math.unipd.it/pub/Misc/equicut}$.
\medskip
\end{center}
Some of the test matrices were from the UF Sparse Matrix
Library maintained by Timothy Davis:
\begin{center}
\medskip $ \mbox{http://www.cise.ufl.edu/research/sparse/matrices/} $
\medskip
\end{center}
Since this second set of matrices is not directly connected with graph
partitioning, we create an $\m{A}$ for graph partitioning as follows:
If the matrix $\m{S}$ from the library was symmetric,
then $\m{A}$ was the adjacency matrix defined as follows:
the diagonal of $\m{A}$ is zero,
$a_{ij} = 1$ if $s_{ij} \ne 0$, and $a_{ij} = 0$ otherwise.
If $\m{S}$ was not symmetric,
then $\m{A}$ was the adjacency matrix of $\m{S}\tr\m{S}$.

\subsection{Lower bound comparison}
Our numerical study begins with a comparison of the
lower bound of Section \ref{mineig} based on the minimum
eigenvalue of $\m{A}+\m{D}$ and the best affine underestimate,
and the lower bound of Section \ref{SDP} based on semidefinite programming.
We label these two lower bounds $LB_1$ and $LB_2$ respectively.
In Table \ref{tab1}, the first 5 graphs correspond to matrices
from the UF Sparse Matrix Library, while the next 5 graphs were from
the test set of Brunetta, Conforti, and Rinaldi.
The column labeled ``Opt'' is the minimum cut and while $n$ is the problem
dimension.
The numerical results indicate that the lower bound $LB_2$ based on
semidefinite programming is generally better (larger) than $LB_1$.
In Table \ref{tab1} the best lower bound is highlighted in bold.
Based on these results,
we use the semidefinite programming-based lower bound in the
numerical experiments which follow.
\begin{table}[h!]
\caption{Comparison of two lower bounds}

\begin{center}
\begin{tabular}
{|l|r|r|r|r|}
\hline Graph & $n$ & $LB_1$ & $LB_2$ & Opt  \\

\hline Tina\_Discal & 11 & 0.31 & {\bf 0.86} & 12 \\
\hline jg1009 & 9 & 1.55 & {\bf 1.72} & 16 \\
\hline jg1011 & 11 & {\bf 1.48} & 0.94 & 24 \\
\hline Stranke94 & 10 & 1.76 & {\bf 1.77} & 24 \\
\hline Hamrle1 & 32 & -1.93 & {\bf 1.12} & 17 \\
\hline 4x5t & 20 & -21.71 & {\bf 5.43} & 28 \\
\hline 8x5t & 40 & -16.16 & {\bf 2.91} & 33 \\
\hline t050 & 30 & 0.90 & {\bf 18.54} & 397 \\
\hline 2x17m & 34 & {\bf 1.33} & 1.27 & 316 \\
\hline s090 & 60 & -9.84 & {\bf 13.10} & 238 \\
\hline
\end{tabular}
\label{tab1}
\end{center}
\end{table}

\subsection{Algorithm performance}
Unless stated otherwise,
the remaining test problems
came from the library of Brunetta, Conforti, and Rinaldi \cite{bcr97}.
Table \ref{tab6} gives results for matrices associated with the
finite element method \cite{DeSouza94}.
The three methods are labeled CQB (our convex quadratic branch and bound
algorithm), KRC (algorithm of Karisch, Rendl, and Clausen \cite{Karisch00}),
and SEN (algorithm of Sensen \cite{Sensen01}).
``$n$'' is the problem dimension,
``\%'' is the percent of nonzeros in the matrix, and
``$\#$~nodes'' is the number of nodes in the branch and bound tree.
The CPU time is given in seconds.
The best time is highlighted in bold.
As can be seen in Table \ref{tab6}, CQB was fastest in 6 out of the
10 problems even though the number of nodes in the branch and bound
tree was much larger.
Thus both KRC and SEN provided much tighter relaxations,
however, the time to solve their relaxed problems was much larger than
the time to optimize our convex quadratics.

\begin{table}[b]
\caption{Mesh Instances}
\begin{center}
\begin{tabular}{|l|c|c|rr|rr|rr|}
\hline
\multicolumn{3}{|c}{} & \multicolumn{2}{c}{CQB} & \multicolumn{2}{c}{KRC} &
\multicolumn{2}{c|}{SEN} \\
\hline
graph &  $n$   & $\%$  &{\small$\#$}nodes & time &{\small $\#$}nodes & time &{\small$\#$}nodes & time \\
\hline
m4    & 32  & 10  & 22    & 0.05    & 1   & {\bf 0.03}  & 1 & 0.14\\
ma    & 54  & 5   & 8     & 0.16    & 1   & {\bf 0.10}  & 1 & 0.28\\
me    & 60  & 5   & 13    & 0.20    & 1   & {\bf 0.13}  & 1 & 0.28\\
m6    & 70  & 5   & 205   & {\bf 0.47}    & 1   & 1.23  & 1 & 1.43\\
mb    & 74  & 4   & 95    & {\bf 0.43}    & 1   & 0.98  & 1 & 1.14\\
mc    & 74  & 5   & 412   & {\bf 0.52}    & 1   & 1.53  & 1 & 1.43\\
md    & 80  & 4   & 101   & {\bf 0.55}    & 1   & 0.96  & 1 & 1.28\\
mf    & 90  & 4   & 99    & {\bf 0.79}    & 1   & 0.80  & 1 & 1.85\\
m1    & 100 & 3   & 200   & {\bf 1.04}    & 15  & 36.50 & 1 & 3.00\\
m8    & 148 & 2   & 3516  & 6.62    & 1   & 10.70 & 1 & {\bf 4.14}\\
\hline
\end{tabular}
\label{tab6}
\end{center} \end{table}

Table \ref{tab7} gives results for
compiler design problems \cite{Ferreira98,JMN93}.
For this test set, KRC was fastest in 3 out of 5 test problems.
Note though that the times for CQB were competitive with KRC.

\begin{table}[h!b!p!]
\caption{Compiler Design}
\begin{center}
\begin{tabular}{|l|c|c|rr|rr|rr|}
    \hline
\multicolumn{3}{|c}{} & \multicolumn{2}{c}{CQB} & \multicolumn{2}{c}{KRC} &
\multicolumn{2}{c|}{SEN} \\
    \hline
graph &  $n$   & $\%$  &{\small$\#$}nodes & time &{\small $\#$}nodes & time &{\small$\#$}nodes & time \\
    \hline
cd30    & 30  & 13  & 11    & 0.05    & 1   & 0.03  & 1 & {\bf 0.00}\\
cd45    & 45  & 10  & 35    & 0.27    & 1   & {\bf 0.23}  & 1 & 0.57\\
cd47a   & 47  & 9   & 45    & 0.34    & 1   & {\bf 0.33}  & 7 & 1.00\\
cd47b   & 47  & 9   & 67    & {\bf 0.29} & 35  & 3.73  & 3 & 1.43\\
cd61    & 61  & 10  & 95    & 0.86    & 1 & {\bf 0.67}  & 6 & 6.00\\
    \hline
\end{tabular}
\label{tab7}
\end{center}
\end{table}

Table \ref{tab8} gives results for binary de Bruijn graphs which
arise in applications related to parallel computer architecture
\cite{Collins92,Feldmann97}. These graphs are constructed by the
following procedure. We first build a directed graph using the
Mathematica command:
\begin{center}
\medskip
\verb"A = TableForm[ToAdjacencyMatrix[DeBruijnGraph[2, n]]]"
\medskip
\end{center}
To obtain the graph partitioning test problem, we add the
Mathematica generated matrix to its transpose and set the diagonal
to 0. For this test set, SEN had by far the best performance.

\begin{table}[h!b!p!]
\caption{de Bruijn Networks}
\begin{center}
\begin{tabular}{|l|c|c|rr|rr|rr|}
    \hline
\multicolumn{3}{|c}{} & \multicolumn{2}{c}{CQB} & \multicolumn{2}{c}{KRC} &
\multicolumn{2}{c|}{SEN} \\
    \hline
graph &  $n$   & $\%$  &{\small$\#$}nodes & time &{\small $\#$}nodes & time &{\small$\#$}nodes & time \\
    \hline
debr5    & 32  & 12  & 57       & 0.11    & 3   & 0.20  & 1 & {\bf 0.00}\\
debr6    & 64  & 6   & 7327     & 2.25    & 55  & 15.63 & 1 & {\bf 1.00}\\
debr7    & 128 & 3   & 16140945 & 1:22:45 & 711 & 46:36 & 1 & {\bf 10.28}\\
    \hline
\end{tabular}
\label{tab8}
\end{center}
\end{table}

Table \ref{tab2} gives results for toroidal grid graphs.
These graphs are connected with an $h \times k$ grid,
the number of vertices in the graph is $n = hk$ and
there are $2hk$ edges whose weights are chosen from a
uniform distribution on the interval $[1, 10]$.
Since Sensen did not solve either this test set,
or the remaining test sets, we now compare between CQB and KRC.
We see in Table \ref{tab2} that CQB was faster than KRC in
9 of the 10 toroidal grid cases.

\begin{table}[h!b!p!]
\caption{Toroidal Grid: a weighted $h \times k$ grid with $hk$ vertices
and $2hk$ edges that received integer weights uniformly drawn from [1,10]}
\begin{center}
\begin{tabular}{|l|c|c|rr|rr|}
    \hline
\multicolumn{3}{|c}{} & \multicolumn{2}{c}{CQB} & \multicolumn{2}{c|}{KRC} \\
    \hline
graph &  $n$   & $\%$  &{\small$\#$}nodes & time &{\small $\#$}nodes & time \\
    \hline
4x5t     & 20  & 21  & 13     & {\bf 0.01}    & 1   & 0.03  \\
6x5t     & 30  & 14  & 46     & {\bf 0.05}    & 1   & 0.10  \\
8x5t     & 40  & 10  & 141    & {\bf 0.16}    & 1   & 0.20  \\
21x2t    & 42  & 10  & 18     & {\bf 0.02}    & 1   & 0.17  \\
23x2t    & 46  & 9   & 78     & {\bf 0.15}    & 33  & 4.16  \\
4x12t    & 48  & 9   & 69     & {\bf 0.17}    & 3   & 0.56  \\
5x10t    & 50  & 8   & 129    & 0.24    & 1   & {\bf 0.20}  \\
6x10t    & 60  & 7   & 992    & {\bf 0.54}    & 43  & 11.66 \\
7x10t    & 70  & 6   & 844    & {\bf 0.68}    & 47  & 19.06 \\
10x8t    & 80  & 5   & 420    & {\bf 0.91}    & 45  & 31.46 \\
    \hline
\end{tabular}
\label{tab2}
\end{center}
\end{table}

Table \ref{tab3} gives results for mixed grid graphs.
These are complete graphs associated with an planar
$h \times k$ planar grid; the edges in the planar grid received integer
weights uniformly drawn from [1,100], while all the other edges needed to
complete the graph received integer weights uniformly drawn from [1,10].
For these graphs, KRC was much faster than CQB.
Notice that the graphs in this test set are completely dense.
One trend that is seen in these numerical experiments is that as
the graph density increases, the performance of CQB relative to
the other methods degrades.

\begin{table}[h!b!p!]
\caption{Mixed Grid Graphs}
\begin{center}
\begin{tabular}{|l|c|c|rr|rr|rr|}
    \hline
\multicolumn{3}{|c}{} & \multicolumn{2}{c}{CQB} & \multicolumn{2}{c|}{KRC} \\
    \hline
graph &  $n$   & $\%$  &{\small$\#$}nodes & time &{\small $\#$}nodes & time  \\
    \hline
2x10m    & 20  & 100  & 150     & {\bf 0.03}    & 1   & {\bf 0.03}  \\
6x5m     & 30  & 100  & 2476    & 0.20    & 1   & {\bf 0.03}  \\
2x17m    & 34  & 100  & 42410   & 2.12    & 21  & {\bf 0.96}  \\
10x4m    & 40  & 100  & 51713   & 3.74    & 2   & {\bf 0.06}  \\
5x10m    & 50  & 100  & 3588797 & 296.19  & 1   & {\bf 0.06}  \\
    \hline
\end{tabular}
\label{tab3}
\end{center}
\end{table}

Results for planar grid graph are given in Table \ref{tab4}.
These graphs are associated with an $h \times k$ grid.
There are $hk$ vertices and $2hk - h - k$ edges whose
weights are integers uniformly drawn from [1,10].
For this relatively sparse test set,
CQB was faster in 7 out of 10 problems.

\begin{table}[h!b!p!]
\caption{Planar Grid}
\begin{center}
\begin{tabular}{|l|c|c|rr|rr|rr|}
    \hline
\multicolumn{3}{|c}{} & \multicolumn{2}{c}{CQB} & \multicolumn{2}{c|}{KRC} \\
    \hline
graph &  $n$   & $\%$  &{\small$\#$}nodes & time &{\small $\#$}nodes & time  \\
    \hline
10x2g    & 20  & 15  & 10     & {\bf 0.01}    & 1   & 0.03  \\
5x6g     & 30  & 11  & 44     & {\bf 0.05}    & 1   & 0.10  \\
2x16g    & 32  & 9   & 23     & {\bf 0.06}    & 1   & 0.13  \\
18x2g    & 36  & 8   & 19     & 0.08    & 1   & {\bf 0.06}  \\
2x19g    & 38  & 8   & 53     & {\bf 0.29}    & 49  & 1.83  \\
5x8g     & 40  & 9   & 24     & 0.08    & 1   & {\bf 0.06}  \\
3x14g    & 42  & 8   & 31     & {\bf 0.14}    & 5   & 0.60  \\
5x10g    & 50  & 7   & 178    & 0.34    & 1   & {\bf 0.30}  \\
6x10g    & 60  & 6   & 224    & {\bf 0.35}    & 57  & 10.63 \\
7x10g    & 70  & 5   & 271    & {\bf 0.63}    & 61  & 18.56 \\
    \hline
\end{tabular}
\label{tab4}
\end{center}
\end{table}

Table \ref{tab5} gives results for randomly generated graphs.
For these graphs, the density is first fixed and then the edges
are assigned integer weights uniformly drawn from [1,10].
For this test set, CQB is fastest in 11 of 20 cases.
Again, observe that the relative performance of CQB degrades as
the density increases, mainly due to the large number of nodes in
the branch and bound tree.

\begin{table}[h!b!p!]
\caption{Randomly Generated Graphs}
\begin{center}
\begin{tabular}{|l|c|c|rr|rr|rr|}
    \hline
\multicolumn{3}{|c}{} & \multicolumn{2}{c}{CQB} & \multicolumn{2}{c|}{KRC} \\
    \hline
graph &  $n$   & $\%$  &{\small$\#$}nodes & time &{\small $\#$}nodes & time  \\
    \hline
v090    & 20  & 10   & 12     & {\bf 0.01}    & 1   & 0.03  \\
v000    & 20  & 100  & 952    & {\bf 0.02}    & 1   & 0.03  \\
t090    & 30  & 10   & 10     & 0.05    & 1   & {\bf 0.03}  \\
t050    & 30  & 50   & 5081   & {\bf 0.32}    & 17  & 0.73  \\
t000    & 30  & 100  & 122670 & 3.79    & 3   & {\bf 0.20}  \\
q090    & 40  & 10   & 89     & 0.14    & 1   & {\bf 0.13}  \\
q080    & 40  & 20   & 914    & {\bf 0.24}    & 31  & 2.30  \\
q030    & 40  & 70   & 554652 & 32.23   & 23  & {\bf 2.06}  \\
q020    & 40  & 80   & 1364517& 72.58   & 7   & {\bf 0.83}  \\
q010    & 40  & 90   & 4344123& 217.16  & 13  & {\bf 1.36}  \\
q000    & 40  & 100  & 8186984& 380.72  & 1   & {\bf 0.13}  \\
c090    & 50  & 10   & 397    & {\bf 0.29}    & 1   & 0.33  \\
c080    & 50  & 20   & 14290  & {\bf 2.20}    & 45  & 6.13  \\
c070    & 50  & 30   & 136290 & 15.70   & 49  & {\bf 8.06}  \\
c030    & 50  & 70   & 22858729&2756.26 & 51  & {\bf 5.46}  \\
c290    & 52  & 10   & 340    & {\bf 0.34}    & 1   & 0.40  \\
c490    & 54  & 10   & 1443   & {\bf 0.54}    & 15  & 3.30  \\
c690    & 56  & 10   & 3405   & {\bf 0.82}    & 3   & 1.00  \\
c890    & 58  & 10   & 13385  & {\bf 2.66}    & 71  & 17.53 \\
s090    & 60  & 10   & 8283   & {\bf 2.01}    & 37  & 9.90  \\
    \hline
\end{tabular}
\label{tab5}
\end{center}
\end{table}

\section{Conclusions}
\label{conclusions}
An exact algorithm is presented for solving the graph partitioning
problem with upper and lower bounds on the size of each set
in the partition.
The algorithm is based on a continuous quadratic programming
formulation of the discrete partitioning problem.
We show how to transform a feasible $\m{x}$ for
the graph partitioning QP (\ref{Q}) to a binary feasible point $\m{y}$
with an objective function value which satisfies $f(\m{y}) \le f(\m{x})$.
The binary feasible point corresponds to a partition of the graph
vertices and $f(\m{y})$ is the weight of the cut edges.
At any stationary point of (\ref{Q}) which is not a local minimizer,
Proposition \ref{descent_direction} provides a descent direction that
can be used to strictly improve the objective function value.

In the branch and bound algorithm,
the objective function is decomposed into the sum of
a convex and a concave part.
A lower bound for the objective function is achieved by replacing
the concave part by an affine underestimate.
Two different decompositions were considered, one based on the
minimum eigenvalue of the matrix in the objective function,
and the other based on  the solution to a semidefinite programming problem.
The semidefinite programming approach generally led to much tighter
lower bounds.
In a series of numerical experiments, the new algorithm CQB
(convex quadratic branch and bound) was
competitive with state-of-the-art partitioning methods;
the relative performance of CQB was better for sparse graphs
than for dense graphs.

\end{document}